\font\sixbb=msbm6
\font\eightbb=msbm8
\font\twelvebb=msbm10 scaled 1095
\def\bb{\fam\bbfam\twelvebb}
\newcommand{\Rea}{{\bb R}}
\newcommand{\FF}{{\bb F}}
\newtheorem{theorem}{\bf Theorem}[section]
\newtheorem{claim}[theorem]{\bf Claim}
\newtheorem{proposition}[theorem]{\bf Proposition}
\newtheorem{lemma}{Lemma}
\newcommand{\enp}{\begin{flushright} $\Box$ \end{flushright}}
\newcommand{\beq}[0]{\begin{equation}}
\newcommand{\enq}[0]{\end{equation}}
\newcommand{\dn}{\Delta_{n-1}}
\newcommand{\cf}{{\cal F}}
\newcommand{\C}{{\mathcal C}}
\newcommand{\E}{\mathbb E}
\newcommand{\cs}{{\cal S}}
\newcommand{\lk}{{\rm lk}}
\newcommand{\prob}{{\rm Pr}}
\newcommand{\cg}{{\cal G}}
\newcommand{\cu}{{\cal U}}
\newcommand{\tdkl}{{\cal T}_d(k,\lambda)}
\newcommand{\td}{{\cal T}_d}
\newcommand{\rinf}{R_{\infty}(Y)}
\newcommand{\namedref}[2]{\hyperref[#2]{#1~\ref*{#2}}}
\title{Collapsibility and vanishing of top homology\\ in random simplicial complexes}
\begin{document}
\author{L. Aronshtam\thanks{Department of Computer Science, Hebrew University, Jerusalem 91904,
    Israel. e-mail: liobsar@gmail.com~.} \and  N. Linial\thanks{Department of Computer Science, Hebrew University, Jerusalem 91904,
    Israel. e-mail: nati@cs.huji.ac.il~. Supported by ISF and BSF grants.}
  \and T. {\L}uczak\thanks{Faculty of Mathematics and Computer Science, Adam Mickiewicz
University, Pozna\'n, Poland. e-mail: tomasz@amu.edu.pl~. Supported by the Foundation for
Polish Science.}
    \and R. Meshulam\thanks{Department of Mathematics,
Technion, Haifa 32000, Israel. e-mail:
meshulam@math.technion.ac.il~. Supported by ISF and BSF grants. }
}

\maketitle
\pagestyle{plain}
\begin{abstract}
Let $\dn$ denote the $(n-1)$-dimensional simplex. Let $Y$ be a
random $d$-dimensional subcomplex of $\dn$ obtained by starting
with the full $(d-1)$-dimensional skeleton of $\dn$ and then adding
each $d$-simplex independently with probability $p=\frac{c}{n}$.
We compute an explicit constant $\gamma_d=\Theta(\log d)$
so that for $c < \gamma_d$ such a random simplicial complex either
collapses to a $(d-1)$-dimensional subcomplex or it contains $\partial \Delta_{d+1}$, the
boundary of a $(d+1)$-dimensional simplex. We conjecture this bound to be sharp.
\\
In addition we show that there exists a constant
$\gamma_d< c_d <d+1$ such that for any $c>c_d$ and a fixed field $\FF$, asymptotically almost surely $H_d(Y;\FF) \neq 0$.
\end{abstract}

\section{Introduction}

Let $G(n,p)$ denote the probability space of graphs on the vertex
set $[n]=\{1,\ldots,n\}$ with independent edge probabilities $p$. It is well known (see e.g. \cite{AS00})
that if $c>1$ then a graph $G \in G(n,\frac{c}{n})$  a.a.s. contains a cycle, while for $c<1$
\begin{equation}
\label{gconn}
\lim_{n \rightarrow \infty} \prob~[~G \in G(n,\frac{c}{n})~:~G~acyclic~] \geq
\sqrt{1-c}\cdot \exp(\frac{c}{2}+\frac{c^2}{4}).
\end{equation}
In this paper we consider the analogous question for $d$-dimensional random complexes.
There are two natural extensions to the notion of an acyclic graph. Namely, the
vanishing of the $d$-th homology, and collapsibility to a $(d-1)$-dimensional subcomplex.
These are the two questions we consider here. We provide an upper bound on the threshold
for the vanishing of the $d$-th homology and a lower bound (which we believe to be tight)
for the threshold for collapsibility.

For a simplicial complex $Y$, let $Y^{(i)}$ denote the $i$-dimensional skeleton of $Y$. Let $Y(i)$
be the set of $i$-dimensional simplices of $Y$ and let $f_i(Y)=|Y(i)|$.
Let $\dn$ denote the $(n-1)$-dimensional simplex on the vertex set
$V=[n]$. For $d \geq 2$ let $Y_d(n,p)$ denote the probability space of complexes
$\Delta_{n-1}^{(d-1)} \subset Y \subset \Delta_{n-1}^{(d)}$ with
probability measure
$$\Pr(Y)=p^{f_d(Y)}(1-p)^{\binom{n}{d+1}-f_d(Y)}~.$$
Let $\FF$ be an arbitrary fixed field and let
$H_i(Y)=H_i(Y;\FF)$ and $H^i(Y)=H^i(Y;\FF)$ denote the $i$-th homology and cohomology groups of $Y$ with coefficients in $\FF$. Let $h_i(Y)=\dim_{\FF} H_i(Y)$ and $h^i(Y)=\dim_{\FF} H^i(Y)$.
Kozlov \cite{K10} proved the following
\begin{theorem}[Kozlov]
\label{kozlov}
For any function $\omega(n)$ that tends to infinity
$$\lim_{n \rightarrow \infty} \prob ~[~Y \in Y_d(n,p):
H_{d}(Y) \neq 0 ~]= \left\{
\begin{array}{ll}
        1 & p=\frac{\omega(n)}{n} \\
        0 & p=\frac{1}{\omega(n)n}.
\end{array}
\right.~~
$$
\end{theorem}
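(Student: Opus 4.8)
The plan is to treat the two regimes separately, after one common reduction. Since $Y \in Y_d(n,p)$ contains no simplices of dimension larger than $d$, we have $\mathrm{im}\,\partial_{d+1}=0$, so $H_d(Y)=\ker\partial_d=Z_d(Y)$ is exactly the space of $d$-cycles. Thus $H_d(Y)\neq 0$ if and only if $Y$ contains a nonempty set $S\subseteq Y(d)$ of $d$-simplices carrying a nonzero cycle; combinatorially, such an $S$ must have every $(d-1)$-face it uses contained in at least two of its members, for otherwise that face could not cancel in $\partial_d$. The whole theorem reduces to locating, or excluding, such configurations.

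For the $1$-statement ($p=\omega(n)/n$) I would exhibit the smallest cycle, the boundary $\partial\Delta_{d+1}$ of a $(d+1)$-simplex, which has $d+2$ facets on $d+2$ vertices. Let $X$ count its copies in $Y$. Then $\E[X]=\binom{n}{d+2}p^{d+2}=(1+o(1))\,\omega^{d+2}/(d+2)!\to\infty$. A second-moment computation finishes the job: two potential copies are independent unless their vertex sets overlap in at least $d+1$ points, and the contribution of overlapping pairs (dominated by those sharing a single facet, i.e. $2d+3$ distinct facets among the two copies) is $o(\E[X]^2)$, so $\mathrm{Var}(X)=o(\E[X]^2)$ and Chebyshev gives $X\geq 1$ a.a.s. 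Since $\partial\Delta_{d+1}$ is a nonbounding cycle in $Y$, this yields $H_d(Y)\neq 0$. The only point to check is that $\partial\Delta_{d+1}$ is strictly balanced for the relevant density $f_d(S)/v(S)$, which holds because deleting any facet keeps all $d+2$ vertices but drops $f_d$ to $d+1$, strictly lowering the ratio below $1$.

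For the $0$-statement ($p=1/(\omega(n)n)$, so $np=1/\omega\to 0$) I would run a first-moment argument over inclusion-minimal cycle supports, writing $\E[N]=\sum_S p^{|S|}$, where $S$ ranges over all potential minimal cycle supports. The geometry enters through two combinatorial inputs. First, a vertex bound: if $S$ is a cycle support with $s=|S|$ facets and $v$ vertices then $v\leq s$, with equality for $\partial\Delta_{d+1}$; this should follow from the fact that every used $(d-1)$-face is covered at least twice, so $f_{d-1}\leq (d+1)s/2$, combined with the connectivity forced by minimality. Granting $v\leq s$, the number of labeled placements of a type on $v$ vertices is at most $n^v$, and $n^v p^s=(np)^s n^{v-s}\leq (np)^s$, so $\E[N]\leq\sum_{s\geq d+2} I_d(s)\,(np)^s$, where $I_d(s)$ is the number of isomorphism types of minimal cycle supports with $s$ facets. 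If one can show $I_d(s)\leq C_d^{\,s}$ for a constant $C_d$ depending only on $d$, then $\E[N]\leq\sum_s (C_d\,np)^s\to 0$ as $np\to 0$, whence $H_d(Y)=0$ a.a.s.

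The main obstacle is precisely this entropy bound $I_d(s)\leq C_d^{\,s}$ for large $s$. For $d=1$ it is trivial, since minimal cycles are simple cycles and there is exactly one type per length; but for $d\geq 2$ a minimal cycle can be an arbitrary homologically irreducible closed configuration, and a naive facet-by-facet encoding loses a factor of order $s$ per facet (from the choice of which existing ridge to attach to), which is fatal. I expect the clean route to be a peeling/exploration argument rather than raw enumeration: equivalently, show that the elementary-collapse process that repeatedly removes a $d$-simplex with a free $(d-1)$-face clears all of $Y(d)$ a.a.s., by coupling the exploration of the surviving core to a subcritical branching process whose offspring mean is of order $np\to 0$. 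Subcriticality both yields the decay $\E[\,\#\{\text{core facets}\}\,]\to 0$ and absorbs the enumeration automatically, since the branching process sums over all core shapes with the correct weights; the delicate point is handling the recursive covered-at-least-twice condition and the closing-up of cycles when passing from the branching tree to the actual core.
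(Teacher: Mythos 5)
First, a point of orientation: the paper does not prove this statement at all --- Theorem~\ref{kozlov} is quoted from Kozlov~\cite{K10}, and the paper's own contributions (Theorems~\ref{upperbb} and~\ref{collapse}) are sharpenings of its two halves. So your proposal has to stand on its own. Your $1$-statement argument does: since $Y$ has no faces of dimension $d+1$, indeed $H_d(Y)=Z_d(Y)$, a copy of $\partial\Delta_{d+1}$ is a nontrivial cycle, and the second moment computation is right --- after normalizing $\omega(n)\le n$ so that $p\le 1$, the only dependent pairs of copies are those sharing exactly $d+1$ vertices (hence one $d$-simplex), contributing $O(n^{d+3}p^{2d+3})=O(\omega^{2d+3}n^{-d})=o(\E[X]^2)$, so Chebyshev gives $X\ge 1$ a.a.s. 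One small correction on the combinatorial input you use later: your derivation of the vertex bound $v\le s$ from $f_{d-1}\le (d+1)s/2$ ``plus connectivity'' does not work as stated; the correct route is the one appearing in the paper's proof of Theorem~\ref{subcmplx}: every vertex $u$ of a core satisfies $\Delta(u)\ge d+1$ (the $d$ ridges of a simplex $\sigma\ni u$ through $u$ must each be covered by a distinct simplex other than $\sigma$), whence $(d+1)v\le\sum_u\Delta(u)=(d+1)s$.

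The $0$-statement, however, has a genuine gap, which you have named but not closed: the entropy bound $I_d(s)\le C_d^{\,s}$ is not a technical detail, it is the entire difficulty. Concretely, with $v\le s$ the crude count of candidate supports gives an expected number at most $(e s^{d} np)^s=(es^d/\omega)^s$, which is useless once $s\gtrsim\omega^{1/d}$, while the trivial bound over all $s$-subsets of $d$-simplices, roughly $(en^d/(\omega s))^s$, only takes over when $s\gtrsim n^d/\omega$; for slowly growing $\omega$ this leaves a huge middle range, and any facet-by-facet construction loses a factor $\Theta(s)$ per step (choice of attachment ridge), i.e.\ a fatal $s^{\Theta(s)}$. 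This is exactly the problem the paper's Lemma~\ref{l:1} is built to solve (at the much harder density $p=c/n$): the attachment entropy is killed by making the expanded face \emph{canonical} (lexicographically minimal with respect to the heavy-subface vector), and the heavy/light bookkeeping with ``saving steps'' converts the core condition into an exponentially small factor per simplex. In Kozlov's regime $np=1/\omega\to 0$ your branching-process hope can in fact be carried out more cheaply, but it requires two ideas that are absent from your write-up: (i) explore the ridge-graph of a putative minimal cycle support in a canonical (say breadth-first) order, so each new simplex attaches to a \emph{determined} active ridge and the only real choice is its extra vertex, either new ($\le n$ ways) or old ($\le\min(2s,n)$ ways), up to $C_d^s$ bookkeeping; and (ii) use the core condition to force reuse: a simplex with a new vertex creates $d$ brand-new ridges, so $2(da+d+1)\le 2f_{d-1}\le (d+1)s$ bounds the number $a$ of new-vertex steps, forcing $b\ge\frac{(d-1)s}{2d}$ old-vertex steps; the resulting first moment $n^{d}C_d^s\omega^{-s}\bigl(\min(2s,n)/n\bigr)^{b}$ then tends to $0$ summably over all $s\ge d+2$. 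Without (i) and (ii), or the machinery of Lemma~\ref{l:1}, your $0$-statement is a plan, not a proof.
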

Our first result improves the upper bound in Theorem \ref{kozlov}.
Let $$g_d(x)=(d+1)(x+1)e^{-x}+x(1-e^{-x})^{d+1}~~$$
and let $c_d$ denote the unique positive solution of the equation $g_d(x)=d+1$.
A direct calculation yields that $c_d=d+1 - \Theta(\frac{d}{e^d})$.
\begin{theorem}
\label{upperbb}
For a fixed $c>c_d$
\begin{equation}
\label{genp}\lim_{n \rightarrow \infty} \prob ~[Y \in Y_d(n,\frac{c}{n}):~H_d(Y) \neq 0 ]= 1~.
\end{equation}
\end{theorem}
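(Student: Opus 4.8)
\section*{Proof proposal}

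The plan is to reduce the statement to a sharp lower bound on the Betti number $h_{d-1}(Y)$ and then close the gap with a concentration argument. Since $Y$ has no simplices of dimension $d+1$, we have $H_d(Y)=\ker\partial_d$, so $h_d(Y)=f_d(Y)-\rk\partial_d$. Because $Y$ contains the full $(d-1)$-skeleton, the cycle space $Z_{d-1}(Y)$ coincides with $Z_{d-1}(\dn^{(d-1)})$, which has dimension $\binom{n-1}{d}$; consequently $h_{d-1}(Y)=\binom{n-1}{d}-\rk\partial_d$ and
\begin{equation}
h_d(Y)=f_d(Y)-\binom{n-1}{d}+h_{d-1}(Y).
\end{equation}
Here $f_d(Y)$ is binomial with $\E f_d(Y)=p\binom{n}{d+1}=\frac{c}{d+1}\binom{n-1}{d}$ and is sharply concentrated by Chernoff. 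Thus everything rests on a good \emph{lower} bound for $h_{d-1}(Y)$, equivalently an \emph{upper} bound for $\rk\partial_d$.

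The heart of the argument is the estimate
\begin{equation}
\E\,h_{d-1}(Y)\ \ge\ \frac{g_d(c)-c}{d+1}\,\binom{n}{d}\,\bigl(1-o(1)\bigr).
\end{equation}
Together with $(1)$, $\E f_d(Y)$, and $\binom{n}{d}=(1+o(1))\binom{n-1}{d}$, this gives $\E h_d(Y)\ge\frac{g_d(c)-(d+1)}{d+1}\binom{n-1}{d}(1-o(1))$, which is $\Theta(n^d)$ exactly when $g_d(c)>d+1$, i.e. when $c>c_d$. I would prove $(2)$ by lower bounding the cocycle space, using $h_{d-1}(Y)=h^{d-1}(Y)=\dim Z^{d-1}(Y)-\binom{n-1}{d-1}$ and $\dim Z^{d-1}(Y)=\binom{n}{d}-\rk\partial_d$. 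The dominant contribution comes from $(d-1)$-faces of small degree (the number of $d$-faces of $Y$ containing them), whose degree tends to $\mathrm{Poisson}(c)$. Every function supported on the \emph{exposed} faces --- those of degree $0$, occurring with probability $(1-p)^{n-d}\to e^{-c}$ --- is automatically a $(d-1)$-cocycle, since no $d$-simplex of $Y$ can contain an exposed face. Faces of degree $1$ are free and are absorbed by a local modification, an elementary collapse of each such face with its unique coface; together the degree-$0$ and degree-$1$ faces realize $\Pr[\mathrm{Poisson}(c)\le1]=(c+1)e^{-c}$ and account for the term $(d+1)(c+1)e^{-c}$ of $g_d$. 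The negative correction $\frac{c}{d+1}\bigl[(1-e^{-c})^{d+1}-1\bigr]$ arises, via an inclusion--exclusion over the $d+1$ boundary faces of a $d$-simplex, from the $d$-faces all of whose $(d-1)$-subfaces have positive degree; these genuinely raise $\rk\partial_d$ and must be subtracted. Taking expectations of the resulting face counts produces precisely the two terms of $g_d$.

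To pass from expectation to ``a.a.s.'', observe that $h_d(Y)$ is a function of the $\binom{n}{d+1}$ independent indicators recording the presence of each $d$-simplex, and that adding or deleting one $d$-simplex changes $f_d(Y)$ by $1$ and $\rk\partial_d$ by at most $1$, hence changes $h_d(Y)$ by at most $1$. McDiarmid's bounded-difference inequality then yields deviations of order $\sqrt{\binom{n}{d+1}}=O(n^{(d+1)/2})$, which is negligible against $\E h_d(Y)=\Theta(n^d)$ precisely because $(d+1)/2<d$ for $d\ge2$. Since $\E h_d(Y)=\Theta(n^d)$ whenever $c>c_d$, we conclude $h_d(Y)>0$, i.e. $H_d(Y)\neq0$, a.a.s. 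If convenient, one may instead prove the bound for $c$ just above $c_d$ and extend to all larger $c$ using that the event $\{H_d(Y)\neq0\}$ is monotone increasing under the addition of $d$-simplices.

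The main obstacle is establishing $(2)$ with the exact constant dictated by $g_d$. Crude inputs --- the bound $\rk\partial_d\le\binom{n-1}{d}$, or counting only exposed faces --- give the correct shape but a weaker threshold of the form $c+(d+1)e^{-c}>d+1$. Extracting the additional degree-$1$ contribution and the $(1-e^{-c})^{d+1}$ correction, while verifying that the cocycles produced by the collapses are genuinely independent modulo the coboundary space $B^{d-1}(Y)$ (of dimension $\binom{n-1}{d-1}$), is the delicate step; it is here that the precise form of $g_d$, and hence the value of $c_d$, is forced.
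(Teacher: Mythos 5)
Your proposal is correct and follows essentially the same route as the paper: the same Euler--Poincar\'{e} reduction $h_d=f_d-\binom{n-1}{d}+h_{d-1}$, the same lower bound on $\dim Z^{d-1}(Y)$ built from degree-$0$ faces plus the free ($(d-1)$-degree-$1$) faces of each $d$-simplex (this is exactly the paper's Claim~\ref{cocycles}, where the independence you flag as the delicate step is immediate because the relevant cocycles have disjoint supports, and only independence inside $Z^{d-1}(Y)$ is needed since the full coboundary dimension $\binom{n-1}{d-1}=O(n^{d-1})$ is subtracted wholesale), and the same expectation computation producing $g_d(c)-(d+1)$. The only cosmetic difference is that you apply McDiarmid directly to $h_d$ (valid, since $h_d=f_d-\rk\partial_d$ is $1$-Lipschitz in the simplex indicators), while the paper applies it to the combinatorial surrogate $v(Y)=f_d(Y)+u(Y)-\binom{n}{d}$ with Lipschitz constant $2d^3$; both give deviations $O(n^{(d+1)/2})$, negligible against the mean $\Theta(n^d)$.
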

\noindent
{\bf Remark:} In the $2$-dimensional case, Theorem \ref{upperbb} implies that if $c>c_2 \simeq 2.783$ then
$Y \in Y_2(n,\frac{c}{n})$ a.a.s. satisfies $H_2(Y)\neq 0$. Simulations indicate that the actual threshold is somewhat lower (around $2.75$).

We next turn to collapsibility.
A $(d-1)-$dimensional simplex $\tau \in \dn (d-1)$ is a {\it free face} of a complex $Y \subset \dn^{(d)}$
if it is contained in a unique $\sigma \in Y(d)$. Let $R(Y)$ denote the complex obtained by removing
all free $(d-1)$-faces of $Y$ together with the $d$-simplices that contain them. We say that $R(Y)$ is obtained from $Y$ by a
$d$-collapse step. Let $R_0(Y)=Y$ and for $i \geq 1$ let $R_i(Y)=R(R_{i-1}(Y))$. We say that
$Y$ is {\it $d$-collapsible} if $\dim R_{\infty}(Y)<d$.
Costa, Farber and Kappeler \cite{CFK10} proved the following
\begin{theorem}[Costa, Farber and Kappeler]
\label{cfk}
If $\omega(n) \rightarrow \infty$
then $Y \in Y_2(n,\frac{1}{\omega(n)n})$ is a.a.s. $2$-collapsible.
\end{theorem}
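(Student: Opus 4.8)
The plan is to translate $2$-collapsibility into the non-existence of a combinatorial ``core'' and then to kill all cores by a first moment computation. Recall that $\rinf$ is the unique maximal subcomplex of $Y$ containing no free $1$-face, so $Y$ fails to be $2$-collapsible exactly when $\rinf$ still carries a $2$-simplex. Equivalently, $Y$ is not $2$-collapsible iff there is a non-empty set $T\subseteq Y(2)$ of triangles in which every edge that lies in some triangle of $T$ lies in at least two of them; call such a $T$ a \emph{core}. If an edge is shared by two triangles they lie in the same class of the ``shared-edge'' relation, so every connected piece of a core is again a core, and it suffices to rule out \emph{edge-connected} cores. Writing $X$ for the number of edge-connected cores present in $Y$, we have $\E[X]=\sum_T p^{|T|}$, the sum running over all combinatorial edge-connected cores, and by Markov it is enough to show $\E[X]=o(1)$.

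The combinatorial engine is a counting lemma. For a core $T$ let $v,e,t$ denote its numbers of vertices, edges and triangles. I claim every vertex lies in at least three triangles. Indeed, fix a vertex $u$ and consider its link $\lk(u)$, the graph on the neighbours of $u$ whose edges $\{w,w'\}$ record the triangles $\{u,w,w'\}\in T$; the number of triangles through $u$ equals $|E(\lk(u))|$. The core condition says that every edge $\{u,w\}$ lies in $\ge 2$ triangles, i.e. every vertex of $\lk(u)$ has degree $\ge 2$; hence $\lk(u)$ contains a cycle and so has at least three edges. Summing the triangle-degrees gives $3t=\sum_u\deg(u)\ge 3v$, that is $v\le t$. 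Moreover equality forces every $\lk(u)$ to be a single $3$-cycle, and a short propagation argument then shows that the only core with $v=t$ is $\partial\Delta_3$. This is the crucial point: it supplies a factor $n^{\,v-t}=n^{-(t-v)}\le 1$ in every term of $\E[X]$, with the sole borderline contribution coming from $\partial\Delta_3$.

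Grouping cores by $(v,t)$ and writing $M(v,t)$ for the number of edge-connected cores on a fixed vertex set $[v]$, I would bound
\[
\E[X]\ \le\ \binom{n}{4}p^{4}\ +\ \sum_{t\ge 4}\ \sum_{v\le t-1} \binom{n}{v}\,M(v,t)\,p^{t}.
\]
The first term is the $\partial\Delta_3$ contribution $\binom{n}{4}p^4\sim\tfrac1{24}\omega^{-4}\to 0$. In the remaining sum $\binom{n}{v}p^{t}\le n^{\,v-t}\omega^{-t}\le n^{-1}\omega^{-t}$, so every non-tetrahedral core is already suppressed by a factor $n^{-1}$, and for the smallest genuine cores the geometric decay $\omega^{-t}$ drives the contribution to $0$ as $\omega\to\infty$.

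The main obstacle is controlling the tail, i.e.\ cores with many triangles, where $M(v,t)$ is large: enumerating triangles one at a time only bounds $M(v,t)$ by a super-exponential factor, reflecting that high-genus pseudo-surfaces with $t$ triangles are super-exponentially numerous. The way I would finish is to play this growth against the vertex deficit. By the lemma the ``excess degree'' $\sum_u(\deg(u)-3)=3(t-v)$ measures how far $T$ is from being locally tetrahedral, and a combinatorially rich core (large genus, or many edges of degree $\ge 3$) must have $t-v$ large, hence carries the strong damping $n^{-(t-v)}$. Organising the enumeration of $M(v,t)$ by this deficit (equivalently by genus) should show that the $n$-powers dominate the type-count term by term, leaving a convergent series that tends to $0$ with $\omega$. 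As a safeguard for the very largest cores one can first invoke a subcritical branching-process estimate: since a fixed triangle has expected $3(n-2)p\sim 3/\omega\to 0$ edge-neighbours, the triangle ``components'' are subcritical and a.a.s.\ none exceeds size $O(\log n/\log\omega)$, so $t$ may be capped and only moderately large cores need be enumerated.
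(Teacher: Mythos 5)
Your skeleton is sound: the reduction of $2$-collapsibility to the absence of a nonempty core, the decomposition into connected pieces, the lemma that every vertex of a core lies in at least three triangles (so $v\le t$, with equality only for $\partial\Delta_3$), and the subcritical branching-process cap $t=O(\log n/\log\omega)$ on the size of any edge-connected family of triangles are all correct in this regime. The genuine gap is exactly the step you flag with ``should show'': a deficit of $t-v\ge 1$ is far too weak to beat the entropy of the core count, and no quantitative substitute is supplied. Concretely, with $v\le t$ the only available bound on your $M(v,t)$ is the crude $M(v,t)\le\binom{\binom{v}{3}}{t}\le (et^{2})^{t}$, so your generic term is
\[
\binom{n}{v}\,M(v,t)\,p^{t}\ \le\ n^{-(t-v)}\Bigl(\frac{et^{2}}{\omega}\Bigr)^{t}\ \le\ n^{-1}\Bigl(\frac{et^{2}}{\omega}\Bigr)^{t},
\]
which for $t$ of order $\log n/\log\log n$ --- well inside your branching cap, e.g.\ with $\omega=\log\log n$ --- is $n^{-1}e^{\Theta(t\log t)}\gg 1$. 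So the first-moment sum, as you bound it, diverges precisely in the tail you identify; ``organising by genus'' is not an argument, and Euler-characteristic considerations do not obviously help, since cores need not be pseudo-surfaces and their Euler characteristic is not a priori bounded in terms of $t-v$.

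What is needed is a deficit \emph{linear} in $t$, and this is exactly what the paper proves (the paper does not reprove Theorem~\ref{cfk}, which it cites from \cite{CFK10}, but the small-core regime in the proof of Theorem~\ref{subcmplx} contains the missing lemma): every \emph{inclusion-minimal} core other than $\partial\Delta_3$ with $t\ge 5$ triangles has $v\le\frac{5}{6}t$. This is where you should work with minimal cores rather than merely edge-connected ones, because the argument uses minimality: if some triangle of a minimal core contained two vertices of triangle-degree exactly $3$, the same link-propagation you used in your equality case produces a copy of $\partial\Delta_3$ properly inside the core, contradicting minimality. Hence each triangle contains at most one vertex of degree $3$, so at most $t/3$ vertices have degree $3$ and the rest have degree $\ge 4$, giving $3t=\sum_u\deg(u)\ge 4v-\frac{t}{3}$, i.e.\ $v\le\frac{5}{6}t$. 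Inserting this into your sum, the generic term becomes at most $\bigl(et^{2}\omega^{-1}n^{-1/6}\bigr)^{t}$, which summed over $5\le t\le O(\log n/\log\omega)$ is $o(1)$, while $\binom{n}{4}p^{4}=O(\omega^{-4})$ handles $\partial\Delta_3$; note that minimal cores are automatically edge-connected and every core contains a minimal one, so the first moment over minimal cores suffices. With that lemma added, your plan does go through, and it is in fact lighter than the paper's machinery: the paper must count cores of size up to $\delta n^{d}$ (for constant $c$) and therefore needs the heavy/light process bound of Lemma~\ref{l:1}, whereas in the subcritical regime $p=1/(\omega n)$ your branching-process cap legitimately replaces it.
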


Our second result improves Theorem \ref{cfk} and the lower bounds in Theorem~\ref{kozlov} as
follows.
Let $$u_d(\gamma,x)=\exp(-\gamma (1-x)^d)-x~~.$$
For small positive $\gamma$, the only solution of $u_d(\gamma,x)=0$ is $x=1$.
Let $\gamma_d$ be the infimum of the set of all nonnegative $\gamma$'s for which the equation $u_d(\gamma,x)=0$ has a solution $x<1$.
More explicitly,
$\gamma_d=(d x(1-x)^{d-1})^{-1}$ where $x$ satisfies $\exp(-\frac{1-x}{dx})=x$.
It is not hard to verify that this yields
\[
\gamma_d = \log d + O(\log \log d).
\]
Let $\cf_{n,d}$ denote the family of all $\dn^{(d-1)} \subset Y \subset \dn^{(d)}$ that do not contain the boundary of a $(d+1)-$simplex.
\begin{theorem}
\label{collapse}
Let $c<\gamma_d$ be fixed. Then in the probability space $Y_d(n,\frac{c}{n})$
\begin{equation}
\label{genp}\lim_{n \rightarrow \infty} \prob ~[Y~is~d-collapsible~|~Y \in  \cf_{n,d}]= 1~.
\end{equation}
\end{theorem}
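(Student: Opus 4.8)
\medskip
\noindent\textbf{Proof strategy.}
The first move is to replace the dynamic notion of $d$-collapsibility by a static one. A $d$-collapse removes a $d$-simplex only together with a free $(d-1)$-face, so the process is confluent and $R_\infty(Y)$ does not depend on the order of collapses. Its surviving $d$-simplices form the unique maximal subcomplex $C\subseteq Y$ in which every $(d-1)$-face lying in some $d$-simplex of $C$ lies in at least two $d$-simplices of $C$; call such a $C$ a \emph{core}. Then $Y$ is $d$-collapsible iff its core is empty. The complex $\partial\Delta_{d+1}$ is itself a core, and the minimal one, since each $(d-1)$-face of a $(d+1)$-simplex lies on exactly two of its $d$-faces. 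A standard moment computation shows that the number of copies of $\partial\Delta_{d+1}$ in $Y_d(n,\frac{c}{n})$ is asymptotically Poisson with mean $c^{d+2}/(d+2)!$, so $\prob[\cf_{n,d}]$ converges to a constant in $(0,1)$.

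Given this, the theorem follows once we prove the \emph{unconditional} statement that for $c<\gamma_d$ a.a.s. $Y$ has no nonempty core avoiding every copy of $\partial\Delta_{d+1}$. Indeed, conditioned on $\cf_{n,d}$ any nonempty core is automatically $\partial\Delta_{d+1}$-free, so if $A$ denotes the event ``$Y$ has a nonempty $\partial\Delta_{d+1}$-free core'' then
\[
\prob[\,\text{core}\neq\emptyset \mid \cf_{n,d}\,]\ \le\ \frac{\prob[A]}{\prob[\cf_{n,d}]}\ \longrightarrow\ 0 .
\]
Since the connected components of a core are themselves cores, it suffices to bound the expected number of \emph{connected} $\partial\Delta_{d+1}$-free cores.

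The heart of the argument is this first moment estimate. I would enumerate connected cores by their number of vertices $v$ and number of $d$-simplices $j$: the expected number of copies of a fixed shape is $\Theta(n^{v})(c/n)^{j}$, so the total expectation has the form
\[
\sum_{v,j} N(v,j)\, c^{\,j}\, n^{\,v-j},
\]
with $N(v,j)$ the number of such shapes on a fixed vertex set. The defining property of a core, that every $(d-1)$-face has degree at least two, caps the number of distinct $(d-1)$-faces by $j(d+1)/2$ and yields an isoperimetric bound on $v-j$. The key combinatorial fact is that $\partial\Delta_{d+1}$ is the unique connected core with $v\ge j$ (there $v=j=d+2$); every other connected core has $v<j$, so each \emph{fixed} $\partial\Delta_{d+1}$-free shape contributes a vanishing factor $n^{v-j}$. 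This disposes of the cores of bounded size, of which there are only finitely many shapes.

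The remaining, and principal, difficulty is the uniform control of the tail, where $N(v,j)$ grows and can compete with the gain $n^{v-j}$. This is exactly where $\gamma_d$ enters. Encoding a core by the degree sequence of its $(d-1)$-faces and applying Stirling turns the logarithm of the summand into a functional of the empirical degree distribution whose stationarity condition, after clearing Lagrange multipliers, is precisely $x=\exp(-c(1-x)^{d})$; for $c<\gamma_d$ the only root in $[0,1]$ is $x=1$ and the functional stays strictly negative off this trivial solution, forcing the sum to $0$, whereas for $c>\gamma_d$ a second root appears and a macroscopic core emerges, which is the source of the conjectured sharpness. The same equation is the survival recursion $p_f=\exp(-c(1-p_f)^{d})$ of the peeling process on the local weak limit of $Y$ around a $(d-1)$-face, a Poisson$(c)$ branching object, and $c<\gamma_d$ is the regime in which a $(d-1)$-face is freed with probability $p_f=1$. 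I expect the genuine obstacle to be analytic rather than conceptual: bounding $N(v,j)$ sharply enough across all scales, and in particular accounting for the overlaps among $(d-1)$-faces forced by the simplicial (rather than purely hypergraph-theoretic) incidence structure, so that the negativity of the optimisation is uniform down to the first scale beyond $\partial\Delta_{d+1}$.
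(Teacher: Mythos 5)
Your reduction is correct and coincides with the paper's: since $\prob[\cf_{n,d}]\to\exp(-c^{d+2}/(d+2)!)>0$, conditioning costs only a bounded factor, so it suffices to show that a.a.s.\ $Y$ has no nonempty core avoiding $\partial\Delta_{d+1}$; and your bounded-size analysis is sound (your ``key combinatorial fact'' is provable along the paper's lines: every vertex of a core lies in at least $d+1$ simplices, giving $j\ge v$, and equality forces a $\partial\Delta_{d+1}$). The genuine gap is everything beyond bounded size. A raw count of shapes gives $N(v,j)=\exp(\Theta(j\log v))$, so the per-shape gain $n^{v-j}\le n^{-\epsilon j}$ coming from the vertex deficiency beats this entropy only while $v$ is polylogarithmic or a small power of $n$; already in the intermediate range $\log^{O(1)}n\ll j\ll n^d$ one needs a genuinely finer encoding of cores --- this is exactly the content of the paper's Lemma~\ref{l:1}, with its heavy/light faces and saving/wasteful steps --- and you offer no substitute for it. Worse, in the macroscopic range $j=\Theta(n^d)$ you propose that a first-moment optimization over degree sequences turns negative precisely for $c<\gamma_d$, with $x=\exp(-c(1-x)^d)$ as its stationarity condition. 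That is a conjecture, not a step of a proof, and there is no reason to expect it to hold: first moments over macroscopic dense substructures are typically dominated by exponentially rare configurations (this is the standard failure mode for $k$-cores of random graphs, whose threshold is likewise not obtained by subgraph counting). Tellingly, the paper's own counting result (Theorem~\ref{subcmplx}) is valid for \emph{every} $c>0$ and only up to size $\delta(c)n^d$; the constant $\gamma_d$ never enters any counting argument in the paper.

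Where $\gamma_d$ actually enters is through a local argument that your proposal mentions only as a heuristic aside: the $k$-neighborhood of a fixed $(d-1)$-face is a.a.s.\ a $d$-tree whose law is close in total variation to the Poisson $d$-tree process $\td(k+1,c)$ (Claims~\ref{subtree} and~\ref{exps}); the pruning recursion $\rho_d(k,\gamma)=\exp(-\gamma(1-\rho_d(k-1,\gamma))^d)$ converges to $1$ exactly when $\gamma<\gamma_d$; local collapsibility of the neighborhood removes the face from $\rinf$, so the expected number of $(d-1)$-faces meeting the core is $o(n^d)$; and a Kruskal--Katona plus large-deviation argument (Theorem~\ref{rrinf}) upgrades this to $\prob[f_d(\rinf)>\delta n^d]=o(1)$. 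Only after this is the first moment invoked, and only below $\delta n^d$. So the missing piece of your plan is not ``analytic rather than conceptual'': to complete the proof you must either reconstruct this local/branching-process mechanism for the macroscopic regime, or establish the uniform negativity of your variational functional up to exactly $\gamma_d$, for which you give no method and which may simply be false.
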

\noindent
{\bf Remark:}
We have calculated $\gamma_2 \simeq 2.455$, and computer simulations suggest that this is indeed the actual threshold for collapsibility
for random complexes in $\cf_{n,2}$. Also,
$\gamma_3 \simeq 3.089, \gamma_4 \simeq 3.508$ and $\gamma_{100} \simeq 7.555.$
\ \\ \\
Clearly, if $Y$ is $d$-collapsible then $Y$ is homotopy equivalent to a $(d-1)$-dimensional complex, and in particular $H_d(Y)=0$. Hence

$$\lim_{n \rightarrow \infty} \prob ~[H_d(Y)=0~|~Y \in  \cf_{n,d}]= 1~$$
and
$$\lim_{n \rightarrow \infty} \prob ~[H_d(Y)=0]=\lim_{n \rightarrow \infty} \prob [\cf_{n,d}]=
\exp(-\frac{c^{d+2}}{(d+2)!})~.$$

The paper is organized as follows. In Section \ref{s:upper} we prove Theorem \ref{upperbb}.
In Section \ref{s:tree} we analyze a random $d$-tree process that underlies
our proof that for  $c< \gamma_d$, a random
$Y \in Y_d(n,\frac{c}{n}) \cap \cf_{n,d}$ is a.a.s. $d$-collapsible. Another main ingredient of the proof is
an upper bound on the number of minimal non $d$-collapsible complexes given in Section \ref{s:noncoll}.
In Section \ref{s:thold} we combine these results to derive Theorem \ref{collapse}.
We conclude in Section \ref{s:remark} with some comments and open problems.

\section{The Upper Bound}
\label{s:upper}
\ \ \ \
Let $Y \in Y_d(n,p)$. Then $h_i(Y)=0$ for $0<i<d-1$ and $f_i(Y)=\binom{n}{i+1}$ for $0 \leq i \leq d-1$.
The Euler-Poincar\'{e} relation $$\sum_{i \geq 0} (-1)^i
f_i(Y)=\sum_{i \geq 0} (-1)^i h_i(Y)$$
therefore implies
\begin{equation}
\label{ep}
h_d(Y)=f_d(Y)-\binom{n-1}{d}+h_{d-1}(Y).
\end{equation}
The inequality $h_d(Y) \geq f_d(Y)- \binom{n-1}{d}$ already implies that if $c> d+1$ then a.a.s. $h_d(Y) \neq 0$.
This was also observed in the $2$-dimensional case by Costa, Farber and Kappeler  \cite{CFK10}.
The idea of the proof of Theorem \ref{upperbb} is to improve this estimate by providing a
non-trivial lower bound on $E[h_{d-1}]$.
For $\tau \in \dn(d-1)$ let $$\deg_Y(\tau)=|\{\sigma \in Y(d): \tau \subset \sigma\}|$$ and let
$$A_{\tau}=\{Y \in Y_d(n,p): \deg_Y(\tau)=0\}~~.$$
For $\sigma \in Y(d)$ let $L_{\sigma}$ be the subcomplex of $\sigma^{(d-1)}$ given by
$$L_{\sigma} =\sigma^{(d-2)} \cup \{\tau \in \sigma^{(d-1)}:\deg_Y(\tau)>1\}~.$$
Let $P_{n,d}$ denote the family of all pairs $(\sigma,L)$, such that $\sigma \in \dn(d)$ and
$\sigma^{(d-2)} \subset L \subset \sigma^{(d-1)}$. For $(\sigma,L) \in P_{n,d}$ let
$$B_{\sigma,L}= \{Y \in Y_d(n,p): \sigma \in Y~,~L_{\sigma} =L \}~~.$$
The space of $i$-cocycles of a complex $K$ is as usual denoted by $Z^i(K)$.
The space of relative $i$-cocycles of a pair $K' \subset K$ is denoted by $Z^i(K,K')$
and will be identified with the subspace of $i$-cocycles of $K$ that vanish on $K'$.
Let $z^i(K)=\dim Z^i(K)$ and $z^i(K,K')=\dim Z^i(K,K')$. For a $(d-1)$-simplex $\tau=[v_0,\ldots,v_{d-1}]$, let
$1_{\tau}$ be the indicator $(d-1)$-cochain of $\tau$ (i.e. $1_{\tau}(\eta)={\rm sgn}(\pi)$ if
$\eta=[v_{\pi(0)},\ldots,v_{\pi(d-1)}]$  and is zero otherwise).
If $\tau \in \dn(d-1)$ then $Z^{d-1}(\tau)$ is the $1$-dimensional space spanned by $1_{\tau}$.
If $(\sigma,L) \in P_{n,d}$ and $f_{d-1}(L)=j$, then  $z^{d-1}(\sigma,L)=d-j$.
indeed, suppose $\sigma=[v_0,\ldots,v_d]$ and for $0 \leq i \leq d$ let $\tau_i=[v_0,\ldots,\widehat{v_i},\ldots,v_d]$. If $L(d-1)=\{\tau_i\}_{i=d-j+1}^d$
then  $\{1_{\tau_0}-1_{\tau_i}\}_{i=1}^{d-j}$ forms a basis of $Z^{d-1}(\sigma,L)$ .

\begin{claim}
\label{cocycles}
For  any $Y \in Y_d(n,p)$
$$
Z^{d-1}(Y) \supset \bigoplus_{\{\tau \in \dn(d-1)~:~Y \in A_{\tau}\}} Z^{d-1}(\tau) \oplus
\bigoplus_{\{(\sigma,L) \in P_{n,d}~:~Y \in B_{\sigma,L}\}} Z^{d-1}(\sigma,L)~~.
$$
\end{claim}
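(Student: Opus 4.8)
The plan is to exhibit each summand on the right-hand side as a subspace of $Z^{d-1}(Y)$ and then argue that the sum is direct by tracking the supports of the relevant cochains. Since $Y$ contains the full $(d-1)$-skeleton of $\dn$, we have $C^{d-1}(Y)=C^{d-1}(\dn^{(d-1)})$, so every $1_\tau$ with $\tau\in\dn(d-1)$ is a cochain of $Y$, and both $Z^{d-1}(\tau)$ and $Z^{d-1}(\sigma,L)$ embed into $C^{d-1}(Y)$ by extending their cochains by zero outside the faces of $\tau$ (resp. $\sigma$). A cochain $\phi\in C^{d-1}(Y)$ lies in $Z^{d-1}(Y)$ precisely when $\delta\phi(\rho)=\phi(\partial\rho)=0$ for every $\rho\in Y(d)$, so the whole argument reduces to checking this condition summand by summand.

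First I would verify that each summand consists of cocycles of $Y$, and this is where the degree hypotheses encoded by $A_\tau$ and $B_{\sigma,L}$ do the work. If $Y\in A_\tau$ then $\deg_Y(\tau)=0$, so $\tau$ is a face of no $\rho\in Y(d)$; hence $\delta 1_\tau(\rho)=0$ for all such $\rho$ and $1_\tau\in Z^{d-1}(Y)$. For a summand $Z^{d-1}(\sigma,L)$ with $Y\in B_{\sigma,L}$, every basis cochain is supported on the facets $\tau_i\subset\sigma$ with $\tau_i\notin L$, that is, on those facets of $\sigma$ with $\deg_Y(\tau_i)=1$. Take $\phi\in Z^{d-1}(\sigma,L)$. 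For $\rho=\sigma$ we have $\delta\phi(\sigma)=0$ because $\phi$ is by definition a relative cocycle of $\sigma$; for any other $\rho\in Y(d)$ with $\rho\neq\sigma$, each facet of $\sigma$ on which $\phi$ is supported has degree $1$ and is therefore contained in $\sigma$ alone, so $\rho$ contains none of them and $\phi(\partial\rho)=0$. Thus $\phi\in Z^{d-1}(Y)$.

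Next I would establish directness by classifying the $(d-1)$-faces of $\dn$ according to their degree in $Y$. A face of degree $0$ supports exactly one summand of the first type, $Z^{d-1}(\tau)$; a face of degree $1$ lies in a unique $\sigma\in Y(d)$, where it is a facet not in $L_\sigma$, and so contributes only to the summand $Z^{d-1}(\sigma,L_\sigma)$; a face of degree greater than $1$ lies in $L_\sigma$ for every $\sigma$ containing it and thus supports no summand at all. Consequently the cochain spaces appearing in the two direct sums have pairwise disjoint supports. Given a relation $\sum_\tau a_\tau 1_\tau+\sum_\sigma \phi_\sigma=0$, restricting to the degree-$0$ faces kills all $\phi_\sigma$ and forces each $a_\tau=0$, while restricting to the degree-$1$ facets of a fixed $\sigma$ isolates $\phi_\sigma$ and forces $\phi_\sigma=0$. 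Hence the sum is direct and contained in $Z^{d-1}(Y)$.

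The only genuinely delicate point is the cocycle check for the $B_{\sigma,L}$ summands: one must confirm that a cochain which is merely a relative cocycle of the single simplex $\sigma$ survives as a cocycle of the entire complex $Y$. This is exactly the role of the condition $\deg_Y(\tau_i)=1$ for the facets $\tau_i\notin L$ --- it guarantees that those facets are private to $\sigma$, so that no other $d$-simplex of $Y$ can detect $\phi$. Everything else is bookkeeping on supports.
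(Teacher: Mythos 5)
Your proof is correct and follows essentially the same route as the paper: the paper simply asserts that the containment is clear and that directness follows because nontrivial cocycles in different summands have disjoint supports, which are exactly the two steps you carry out. Your write-up just makes explicit the degree bookkeeping (degree $0$ faces for the $A_\tau$ summands, degree $1$ facets private to a single $\sigma$ for the $B_{\sigma,L}$ summands) that the paper leaves implicit.
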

\noindent
{\bf Proof:} The containment is clear. To show that the right hand side is a direct sum note that
nontrivial cocycles in different summands must have disjoint supports and are therefore linearly independent.
{\enp}
\noindent
Let $$a(Y)=|\{\tau \in \dn(d-1)~:~\deg_Y(\tau)=0\}|$$ and for $0 \leq j \leq d$ let
$$\alpha_j(Y)=|\{(\sigma,L) \in P_{n,d}~:~Y \in B_{\sigma,L}~,~f_{d-1}(L)=j\}|.$$
Note that  $\alpha_j(Y)$ is the number of $d$-faces of $Y$ that contain exactly $d+1-j$ $(d-1)$-faces of degree 1.
By Claim \ref{cocycles}
\begin{equation}
\label{lbz}
z^{d-1}(Y) \geq u(Y)\stackrel{\text{def}}{=}a(Y)+\sum_{j=0}^{d} \alpha_j(Y) (d-j)~~.
\end{equation}
As $h_{d-1}(Y)=h^{d-1}(Y)= z^{d-1}(Y)-\binom{n-1}{d-1}$, it follows from (\ref{ep}) and (\ref{lbz}) that
\begin{equation}
\label{lbh}
h_d(Y) \geq v(Y)\stackrel{\text{def}}{=}f_d(Y)+u(Y)-\binom{n}{d}~~.
\end{equation}
Theorem \ref{upperbb} will thus follow from
\begin{theorem}
\label{larged}
Let $c> c_d$ and let $p=\frac{c}{n}$. Then
$$\lim_{n \rightarrow \infty} \prob ~[~Y \in Y_d(n,p):v(Y) \leq 0]=0~.$$
\end{theorem}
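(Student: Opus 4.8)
The plan is to prove the theorem by the second moment method applied to $v(Y)$: I will show that $E[v(Y)]$ is positive and of order $n^d$, and that $\mathrm{Var}(v(Y))=o(n^{2d})$, so that Chebyshev's inequality forces $v(Y)>0$ asymptotically almost surely.

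The first step is to rewrite $v(Y)$ in a shape convenient for both moment computations. Writing $D_\sigma$ for the number of $(d-1)$-faces of $\sigma\in Y(d)$ of degree exactly $1$, the term $\alpha_j(Y)(d-j)$ contributes $D_\sigma-1$ for each $\sigma$ with $D_\sigma\ge 1$ and nothing when $D_\sigma=0$ (since then $f_{d-1}(L_\sigma)=d+1>d$ falls outside the range $0\le j\le d$). A short bookkeeping argument using $f_d(Y)+u(Y)$ then yields the identity
$$ v(Y)=a(Y)+b(Y)+w(Y)-\binom{n}{d}, $$
where $a(Y)$ counts $(d-1)$-faces of degree $0$, $b(Y)=\sum_{\sigma\in Y(d)}D_\sigma$ counts $(d-1)$-faces of degree $1$, and $w(Y)=|\{\sigma\in Y(d):D_\sigma=0\}|$ counts $d$-faces all of whose facets have degree $\ge 2$. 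All three are sums of $\{0,1\}$-indicators attached to individual faces, which is exactly what the moment estimates need.

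Next I would compute the first moment. Each $(d-1)$-face has $n-d$ potential cofaces, and, conditional on a fixed $d$-face being present, its $d+1$ facets acquire independent degrees (their coface sets are disjoint); writing $q=(1-p)^{n-d-1}\to e^{-c}$ one obtains $E[a]=\binom{n}{d}(1-p)^{n-d}$, $E[b]=\binom{n}{d}(n-d)p(1-p)^{n-d-1}$, and $E[w]=\binom{n}{d+1}p(1-q)^{d+1}$. Substituting $p=c/n$ and collecting the leading $n^d$ terms gives
$$ E[v(Y)]=\frac{n^d}{(d+1)!}\bigl(g_d(c)-(d+1)\bigr)(1+o(1)). $$
Since $g_d(0)=d+1$, $g_d(c)<d+1$ for small $c>0$, and $c_d$ is the unique positive root of $g_d=d+1$ beyond which $g_d(c)>d+1$, the hypothesis $c>c_d$ makes $E[v(Y)]=\Theta(n^d)$ and strictly positive.

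The decisive step is the variance bound. The indicator attached to a $(d-1)$-face $\tau$ depends only on the $n-d$ cofaces of $\tau$, and the indicator attached to a $d$-face $\sigma$ depends only on $\sigma$ and the cofaces of its facets; in each case the relevant set of $d$-face coordinates has size $O(n)$, and two indicators are uncorrelated unless their neighborhoods meet. The term $w$ is the main obstacle, because each of its indicators depends on $\Theta(n)$ coordinates and a single $d$-face can flip $\Theta(n)$ of them, so a worst-case Lipschitz/McDiarmid estimate is far too weak. Instead I exploit that each such indicator has probability $\Theta(1/n)$, that $\mathrm{Cov}(\xi_\sigma,\xi_{\sigma'})=O(1/n^2)$ for $\sigma\neq\sigma'$ because $\xi_\sigma\xi_{\sigma'}\le \mathbf 1[\sigma,\sigma'\in Y]$ has expectation $p^2$, and that only $O(n^2)$ faces $\sigma'$ are correlated with a given $\sigma$ (an intermediate adjacent face gives two factors of $O(n)$). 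This yields $\mathrm{Var}(w)=O(n^{d})+\binom{n}{d+1}\cdot O(n^2)\cdot O(1/n^2)=O(n^{d+1})$; the analogous but easier estimates for $a$, $b$ and for the cross-covariances contribute only $O(n^{d})$, so $\mathrm{Var}(v)=O(n^{d+1})$. Because $d\ge 2$ this is $o(n^{2d})=o(E[v]^2)$, and Chebyshev's inequality gives $\prob[v(Y)\le 0]\le \mathrm{Var}(v)/E[v]^2=O(n^{1-d})\to 0$, which is precisely the assertion of the theorem.
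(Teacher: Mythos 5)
Your proof is correct, and while the first half follows the paper, the concentration step takes a genuinely different route. Your identity $v=a+b+w-\binom{n}{d}$ is a valid (and clean) repackaging of the paper's $u(Y)=a(Y)+\sum_{j}\alpha_j(Y)(d-j)$, and your first moment agrees with the paper's, $E[v]=\frac{n^d}{(d+1)!}\left(g_d(c)-d-1\right)-O(n^{d-1})=\Theta(n^d)$ for $c>c_d$. The divergence is in how positivity of $v$ is deduced: the paper applies McDiarmid's bounded-differences inequality, observing that changing a single $d$-simplex alters $a$ and each $\alpha_j$ by at most $d+1$, hence $v$ by at most $T=2d^3$, which with $m=\binom{n}{d+1}$ coordinates and $\lambda=E[v]$ gives $\prob[v\le 0]\le\exp(-\Omega(n^{d-1}))$. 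You instead bound $\mathrm{Var}(v)=O(n^{d+1})$ by covariance counting and invoke Chebyshev, getting $\prob[v\le 0]=O(n^{1-d})$. This is weaker but entirely sufficient for the stated limit, since $d\ge 2$. Your bookkeeping for $\mathrm{Var}(w)$ is sound: dependent pairs $(\sigma,\sigma')$ number $O(n^{d+3})$ and each covariance is $O(n^{-2})$ because $\xi_\sigma\xi_{\sigma'}\le\mathbf 1[\sigma,\sigma'\in Y]$; and even where you wave at the cross-covariances being ``analogous but easier,'' nothing is at risk, since Cauchy--Schwarz, $|\mathrm{Cov}(a,w)|\le\sqrt{\mathrm{Var}(a)\,\mathrm{Var}(w)}$, already caps every cross term by $O(n^{d+1})$.

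One claim in your write-up is, however, false and should be removed: your stated reason for abandoning the Lipschitz/McDiarmid route. You assert that a single $d$-face can flip $\Theta(n)$ of the indicators constituting $w$. It cannot. Adding or deleting a face $\eta$ changes the degrees of only its $d+1$ facets, and a facet whose degree passes through the value $1$ affects $D_{\sigma'}$ for at most one other simplex $\sigma'$ (the unique other coface it had or acquires); hence $w$, and likewise $v$, changes by $O(d)$ under a one-coordinate change. A face $\eta$ does lie in the dependency sets of $\Theta(n)$ indicators $\xi_\sigma$, but lying in a dependency set is not the same as being able to flip the indicator: the worst-case bounded-difference constant is $O(d)$, not $\Theta(n)$. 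This is precisely the observation the paper exploits, and it buys an exponentially small failure probability $\exp(-\Omega(n^{d-1}))$ rather than your polynomial $O(n^{1-d})$; your second-moment argument remains valid, but its motivation misrepresents the alternative it rejects.
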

\noindent
{\bf Proof:} First note that
$$E[f_d]=\binom{n}{d+1}p=\frac{c}{(d+1)!} n^{d}-O(n^{d-1})~,$$
$$E[a]=\binom{n}{d}(1-p)^{n-d}=\frac{e^{-c}}{d!}n^{d}-O(n^{d-1})~,$$
and for $0 \leq j \leq d$
$$
E[\alpha_j]=\binom{n}{d+1} \binom{d+1}{j} p (1-p)^{(n-d-1)(d+1-j)} (1-(1-p)^{n-d-1})^{j}=$$
$$
\frac{n^d c }{(d+1)!}\binom{d+1}{j}e^{-c(d+1-j)}(1-e^{-c})^j-O(n^{d-1})~.$$
Therefore
$$E[u]=E[a]+\sum_{j=0}^{d} E[\alpha_j](d-j)=$$
$$\frac{n^d e^{-c}}{d!}+\frac{n^d c}{(d+1)!}  \sum_{j=0}^d \binom{d+1}{j}e^{-c(d+1-j)}(1-e^{-c})^j (d-j)-O(n^{d-1}) =$$
$$\frac{n^d}{(d+1)!}((1+c)(d+1)e^{-c}-c(1-(1-e^{-c})^{d+1}))-O(n^{d-1}).$$
It follows that
$$E[v]=E[f_d]+E[u]-\binom{n}{d}=$$ $$\frac{n^d}{(d+1)!} (c+(1+c)(d+1)e^{-c}-c(1-(1-e^{-c})^{d+1})-(d+1))-O(n^{d-1})=$$
$$\frac{n^d}{(d+1)!} (g_d(c)-d-1) -O(n^{d-1})~.$$
Since $c>c_d$ it follows that for sufficiently large $n$
\begin{equation}
E[v] \geq \epsilon n^d
\end{equation}
where $\epsilon>0$ depends only on $c$ and $d$.
To show that $v$ is a.a.s. positive we
use the following consequence of Azuma's inequality due to McDiarmid \cite{McDiarmid}.
\begin{theorem}
\label{azuma}
Suppose $f:\{0,1\}^m \rightarrow \Rea$ satisfies
$|f(x)-f(x')| \leq T$ if $x$ and $x'$ differ in at most one coordinate.
Let $\xi_1,\ldots,\xi_m$ be independent $0,1$ valued random variables
and let $F=f(\xi_1,\ldots,\xi_m)$. Then for all $\lambda>0$
\begin{equation}
\label{azumain}
\prob[F \leq E[F] - \lambda] \leq \exp(-\frac{2\lambda^2}{T^2 m})~.
\end{equation}
\end{theorem}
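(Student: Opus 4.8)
The plan is to prove this as an instance of the bounded-differences method: realize $F$ as the terminal value of a Doob martingale and apply Azuma's inequality coordinate by coordinate. First I would set up the filtration $\mathcal{F}_k=\sigma(\xi_1,\ldots,\xi_k)$ together with the martingale $F_k=E[F\mid \mathcal{F}_k]$, so that $F_0=E[F]$ and $F_m=F$. Writing $D_k=F_k-F_{k-1}$ for the martingale differences, one has $F-E[F]=\sum_{k=1}^m D_k$ with $E[D_k\mid \mathcal{F}_{k-1}]=0$, and the entire argument reduces to controlling the conditional moment generating function of each $D_k$.

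The key structural step exploits the independence of the $\xi_i$ to convert the bounded-differences hypothesis into a bound on the conditional range of each $D_k$. Because $\xi_{k+1},\ldots,\xi_m$ are independent of $\mathcal{F}_k$, one has $F_k=g_k(\xi_1,\ldots,\xi_k)$, where $g_k(x_1,\ldots,x_k)=E[f(x_1,\ldots,x_k,\xi_{k+1},\ldots,\xi_m)]$ is obtained by averaging out the last $m-k$ coordinates. For two admissible values of the $k$-th coordinate, the difference $g_k(\ldots,x_k)-g_k(\ldots,x_k')$ is an average of terms $f(\ldots,x_k,\ldots)-f(\ldots,x_k',\ldots)$, each bounded by $T$ in absolute value since its two arguments differ in a single coordinate. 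Hence, conditioned on $\mathcal{F}_{k-1}$, the variable $F_k$ lies in an interval of width at most $T$; subtracting its conditional mean $F_{k-1}$ shows that $D_k$ lies in a (random, $\mathcal{F}_{k-1}$-measurable) interval of width at most $T$.

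With this in hand I would invoke Hoeffding's lemma: any $X$ with $E[X]=0$ supported in an interval of length $\ell$ satisfies $E[e^{sX}]\le \exp(s^2\ell^2/8)$ for all real $s$. Applied conditionally this gives $E[e^{sD_k}\mid \mathcal{F}_{k-1}]\le \exp(s^2T^2/8)$. Chaining these estimates through the tower property—conditioning on $\mathcal{F}_{m-1}$, pulling out the $\mathcal{F}_{m-1}$-measurable factor $e^{s\sum_{k<m}D_k}$, and iterating down to $k=1$—yields $E[e^{s(F-E[F])}]\le \exp(ms^2T^2/8)$ for every $s$.

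Finally I would extract the lower tail by a Chernoff-type argument: for $s>0$,
$$\prob[F\le E[F]-\lambda]=\prob[e^{-s(F-E[F])}\ge e^{s\lambda}]\le e^{-s\lambda}\,E[e^{-s(F-E[F])}]\le \exp\!\Big(\tfrac{ms^2T^2}{8}-s\lambda\Big).$$
Optimizing the exponent over $s>0$ at $s=4\lambda/(mT^2)$ gives exactly $-2\lambda^2/(T^2m)$, which is the asserted bound. The only genuinely delicate point is the conditional-range step combined with Hoeffding's lemma; the rest is routine martingale-and-Chernoff machinery, and the precise constant $2$ in the exponent is exactly what the width-$T$ intervals and the factor $1/8$ in Hoeffding's lemma produce after the optimization in $s$.
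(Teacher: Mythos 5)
Your proof is correct, and there is nothing to compare it against within the paper itself: the paper states this theorem as a known result, citing McDiarmid's survey \cite{McDiarmid}, and gives no proof. The argument you propose---Doob martingale $F_k=E[F\mid\sigma(\xi_1,\ldots,\xi_k)]$, the observation that independence lets you write $F_k$ as an average over the unexposed coordinates so that each difference $D_k$ is conditionally supported in an interval of width at most $T$, Hoeffding's lemma applied conditionally, the tower-property chaining of the moment generating function, and the Chernoff optimization at $s=4\lambda/(mT^2)$ producing the exponent $-2\lambda^2/(T^2m)$---is precisely the standard proof of the bounded differences inequality found in the cited source. All steps check out, including the delicate one: conditioned on $\xi_1,\ldots,\xi_{k-1}$, the variable $F_k$ takes at most two values (since $\xi_k\in\{0,1\}$) differing by at most $T$, and subtracting the conditional mean $F_{k-1}$ keeps it in a width-$T$ interval, so Hoeffding's lemma with $\ell=T$ applies and the factor $1/8$ yields the constant $2$ after optimizing.
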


Let $m=\binom{n}{d+1}$ and let $\sigma_1,\ldots,\sigma_m$ be an arbitrary ordering
of the $d$-simplices of $\dn$. Identify $Y \in Y_d(n,p)$ with its indicator vector
$(\xi_1,\ldots,\xi_m)$ where $\xi_i(Y)=1$ if $\sigma_i \in Y$ and $\xi_i(Y)=0$ otherwise.
Note that if $Y$ and $Y'$ differ in at most one $d$-simplex then
$|a(Y)-a(Y')|\leq d+1$ and $|\alpha_j(Y)-\alpha_j(Y')| \leq d+1$ for all $0 \leq j \leq d$.
It follows that
$|v(Y)-v(Y')| \leq T=2d^3$. Applying McDiarmid's inequality (\ref{azumain}) with $F=v$
and $\lambda = E[v]$ it follows that
$$
\prob[v \leq 0] \leq \exp(-\frac{2 E[v]^2}{(2d^3)^2 m})\leq \exp(-C_2 n^{d-1})
$$
for some $C_2=C_2(c,d)>0$. {\enp}
\ \\ \\
{\bf Remark:} The approach used in the proof of Theorem \ref{upperbb} can be extended as follows.
For a fixed $\ell$, let $Z_{(\ell)}^{d-1}(Y) \subset Z^{d-1}(Y)$ denote the subspace spanned by
$(d-1)$-cocycles $\phi \in Z^{d-1}(Y)$ such that $|supp(\phi)| \leq \ell$. Let
$$\theta_{d,\ell}(x)=\lim_{n \rightarrow \infty} \frac{E[\dim  Z_{(\ell)}^{d-1}(Y)]}{\binom{n}{d}}$$
where the expectation is taken in the probability space $Y_d(n,\frac{x}{n})$.
For example, it was shown in the proof of Theorem \ref{upperbb} that
$\theta_{d,1}(x)=e^{-x}$ and $$\theta_{d,2}(x)=(1+x)e^{-x}-\frac{x}{d+1}(1-(1-e^{-x})^{d+1}).$$
Let $x=c_{d,\ell}$ denote the unique positive root of the equation
$$x+(d+1)\theta_{d,\ell}(x)=d+1.$$
The following fact is implicit in the proof of Theorem \ref{upperbb}.
\begin{proposition}
\label{upperbbb}
For any fixed $c>c_{d,\ell}$
\begin{equation}
\label{genpp}\lim_{n \rightarrow \infty} \prob ~[Y \in Y_d(n,\frac{c}{n}):~H_d(Y) \neq 0 ]= 1~.
\end{equation}
\end{proposition}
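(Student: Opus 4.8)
The plan is to run the proof of Theorem~\ref{larged} again, with the combinatorial lower bound $u(Y)$ on $z^{d-1}(Y)$ replaced by $\dim Z_{(\ell)}^{d-1}(Y)$. Since $Z_{(\ell)}^{d-1}(Y)\subseteq Z^{d-1}(Y)$ we have $z^{d-1}(Y)\geq \dim Z_{(\ell)}^{d-1}(Y)$, so the same manipulation that led from (\ref{ep}) to (\ref{lbh}), together with $h_{d-1}(Y)=z^{d-1}(Y)-\binom{n-1}{d-1}$, gives $h_d(Y)\geq w(Y):=f_d(Y)+\dim Z_{(\ell)}^{d-1}(Y)-\binom{n}{d}$. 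Hence it suffices to prove that a.a.s. $w(Y)>0$ in $Y_d(n,\frac{c}{n})$.

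First I would compute $E[w]$. As in Theorem~\ref{larged}, $E[f_d]=\binom{n}{d+1}\frac{c}{n}=(\tfrac{c}{d+1}+o(1))\binom{n}{d}$, while by the very definition of $\theta_{d,\ell}$ one has $E[\dim Z_{(\ell)}^{d-1}(Y)]=(\theta_{d,\ell}(c)+o(1))\binom{n}{d}$. Therefore
$$E[w]=\Big(\tfrac{c}{d+1}+\theta_{d,\ell}(c)-1+o(1)\Big)\binom{n}{d}=\frac{c+(d+1)\theta_{d,\ell}(c)-(d+1)+o(1)}{d+1}\binom{n}{d}.$$
Since $c>c_{d,\ell}$ and $c_{d,\ell}$ is the unique positive root of $x+(d+1)\theta_{d,\ell}(x)=d+1$, the bracketed constant is strictly positive, so $E[w]\geq \epsilon n^d$ for all large $n$ and some $\epsilon=\epsilon(c,d,\ell)>0$. (For $\ell=2$ this recovers $g_d$ and $c_{d,2}=c_d$, so the computation is precisely the one already carried out.)

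Finally I would apply McDiarmid's inequality exactly as in Theorem~\ref{larged}: identify $Y$ with its indicator vector $(\xi_1,\dots,\xi_m)$, $m=\binom{n}{d+1}$, and apply (\ref{azumain}) to $F=w$ with $\lambda=E[w]$, which gives $\prob[w\leq 0]\leq \exp(-C n^{d-1})\to 0$ as soon as $w$ has a bounded-difference constant $T=T(d,\ell)$ independent of $n$. Since $|f_d(Y)-f_d(Y')|\leq 1$ when $Y,Y'$ differ in one $d$-simplex, the whole difficulty — and the main obstacle — is to bound the effect of a single flip on $\dim Z_{(\ell)}^{d-1}$ by a constant depending only on $d$ and $\ell$. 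The key observation is local: adding or deleting a $d$-simplex $\sigma$ changes the cocycle condition only at $\sigma$, and for a $(d-1)$-cochain $\phi$ the value of the coboundary on $\sigma$ depends only on the values of $\phi$ on the $d+1$ faces $\tau_0,\dots,\tau_d$ of $\sigma$. Consequently every small-support cochain that is a cocycle of one of $Y,Y'$ but not of the other must contain some $\tau_i$ in its support, and the change in $\dim Z_{(\ell)}^{d-1}$ is governed by the restriction map $\phi\mapsto(\phi(\tau_0),\dots,\phi(\tau_d))$, whose image has dimension at most $d+1$. Turning this heuristic into a bound that is genuinely uniform in $n$ — ruling out that one flip creates or destroys unboundedly many independent small-support cocycles — is the technical heart of the argument; an alternative is to replace $\dim Z_{(\ell)}^{d-1}(Y)$ by an explicit direct-sum lower bound built from local configurations with $|\spp|\leq \ell$, in the spirit of Claim~\ref{cocycles}, whose bounded differences are manifest and whose expectation can then be shown to converge to $\theta_{d,\ell}(c)$.
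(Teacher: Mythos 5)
Your reduction and the expectation computation are fine: $h_d(Y)\geq w(Y)=f_d(Y)+\dim Z_{(\ell)}^{d-1}(Y)-\binom{n}{d}$ follows exactly as in (\ref{lbh}), and $E[w]=\bigl(\tfrac{c+(d+1)\theta_{d,\ell}(c)-(d+1)}{d+1}+o(1)\bigr)\binom{n}{d}\geq \epsilon n^d$ is immediate from the definition of $\theta_{d,\ell}$ (and your parenthetical check that $\ell=2$ recovers $g_d$ is correct). The genuine gap is the one you flag yourself: a bounded-differences constant for $\dim Z_{(\ell)}^{d-1}$, uniform in $n$, is never established, and the heuristic you offer cannot establish it. Adding a $d$-simplex $\sigma$ imposes one linear condition $\lambda(\phi)=\delta\phi(\sigma)=0$, so the \emph{full} cocycle space $Z^{d-1}(Y)$ drops in dimension by at most $1$; but $Z_{(\ell)}^{d-1}(Y)$ is the span of a \emph{set} of vectors (the cocycles with $|\spp(\phi)|\leq\ell$), and intersecting that set with the hyperplane $\ker\lambda$ can shrink its span by far more than the codimension. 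Concretely, if $\phi_1,\dots,\phi_N$ are cocycles of support $\leq\ell$, each taking the value $1$ on a fixed face $\tau_0$ of $\sigma$ and vanishing on the other faces of $\sigma$, then after adding $\sigma$ none of them remains a cocycle; the differences $\phi_i-\phi_j$ do remain cocycles but have support up to $2\ell-2>\ell$, so they need not lie in the span of the surviving support-$\leq\ell$ cocycles, and $\dim Z_{(\ell)}^{d-1}$ may a priori drop by as much as $N-1$. In other words, the drop is \emph{not} controlled by the rank of the restriction map $\phi\mapsto(\phi(\tau_0),\dots,\phi(\tau_d))$, which is the only quantity your sketch bounds; what you would need is that the small-support cocycles whose supports meet $\sigma^{(d-1)}$ span a bounded-dimensional space modulo the small-support cocycles of the smaller complex. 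Note also that (\ref{azumain}) requires this Lipschitz bound for \emph{every} pair of complexes differing in one simplex, so even showing the drop is typically bounded would not suffice for McDiarmid as stated.

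The fix is precisely your final sentence, and it is what the paper intends when it calls the proposition ``implicit in the proof of Theorem~\ref{upperbb}'': for $\ell=2$ the paper never applies concentration to $\dim Z_{(2)}^{d-1}(Y)$ itself, but to the explicit configuration count $u(Y)=a(Y)+\sum_{j}\alpha_j(Y)(d-j)$ coming from the disjoint-support direct sum of Claim~\ref{cocycles}, whose bounded-differences constant ($T=2d^3$) is manifest because each simplex flip touches only boundedly many local configurations. Carrying this out for general $\ell$ requires three things you do not do: (i) define an explicit direct-sum lower bound $u_\ell(Y)\leq z^{d-1}(Y)$ built from families of support-$\leq\ell$ cocycles with pairwise disjoint supports; (ii) show that $E[u_\ell]=(\theta_{d,\ell}(c)-o(1))\binom{n}{d}$, i.e.\ that overlapping small-support cocycles contribute only $o(n^d)$ to $E[\dim Z_{(\ell)}^{d-1}]$ --- this step is needed to tie the bound to the root $c_{d,\ell}$ and is not automatic from the definition of $\theta_{d,\ell}$; and (iii) apply McDiarmid to $f_d+u_\ell-\binom{n}{d}$ as before. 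As written, your primary route stalls at an unproved (and problematic) lemma, and your alternative route is named but not executed, so the proposal is incomplete.
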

\noindent
Let $\tilde{c}_d=\lim_{\ell \rightarrow \infty} c_{d,\ell}$.
It seems likely that $\frac{\tilde{c}_d}{n}$
 is the exact threshold for the vanishing
of $H^d(Y)$. This is indeed true in the graphical case $d=1$.
\begin{proposition}
\label{grph}
$$\tilde{c}_1=1.$$
\end{proposition}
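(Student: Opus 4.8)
The plan is to reinterpret $\theta_{1,\ell}$ as a component count in the random graph $Y_1(n,p)=G(n,p)$, evaluate the limit $\ell\to\infty$ via the classical tree function, and then control the roots $c_{1,\ell}$. For $Y\in Y_1(n,p)$ a $0$-cocycle is precisely a vertex function that is constant on each connected component, so the indicator cochains $1_C$ of the components $C$ form a basis of $Z^0(Y)$, with $\spp(1_C)=C$. A $0$-cocycle of support at most $\ell$ is therefore a combination of those $1_C$ whose components have total size $\le\ell$; in particular each $1_C$ with $|C|\le\ell$ is such a cocycle, and conversely every support-$\le\ell$ cocycle lies in their span. Hence $\dim Z^0_{(\ell)}(Y)$ equals the number of connected components of $Y$ with at most $\ell$ vertices, and, since $\binom n1=n$,
$$\theta_{1,\ell}(x)=\lim_{n\to\infty}\frac1n\,\E\big[\#\{\text{components of }Y\text{ of size}\le\ell\}\big].$$

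I would then run the usual first-moment computation in $G(n,x/n)$. For a fixed $k\le\ell$, a prescribed connected graph on $k$ of the $n$ vertices with $m$ edges has expected multiplicity of order $n^{k-m}$; connectivity forces $m\ge k-1$, so after dividing by $n$ only the spanning trees ($m=k-1$) survive while all non-tree components contribute $o(n)$. With Cayley's count $k^{k-2}$ and $(1-p)^{k(n-k)}\to e^{-kx}$ this gives
$$\theta_{1,\ell}(x)=\sum_{k=1}^{\ell}\frac{k^{k-2}}{k!}\,x^{k-1}e^{-kx},\qquad \theta_1(x):=\lim_{\ell\to\infty}\theta_{1,\ell}(x)=\frac1x\,U(xe^{-x}),$$
where $U(y)=\sum_{k\ge1}\frac{k^{k-2}}{k!}y^k$ is the exponential generating function of unrooted labelled trees. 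Writing $T(y)=\sum_{k\ge1}\frac{k^{k-1}}{k!}y^k$ for the tree function, which satisfies $T=ye^{T}$, and using the classical identity $U=T-\tfrac12T^2$, I get $\theta_1(x)=\frac1x\big(w-\tfrac12w^2\big)$ with $w:=T(xe^{-x})$. For $0<x\le1$ the principal branch gives $w=x$, so $\theta_1(x)=1-\tfrac x2$ and hence $x+2\theta_1(x)=2$ identically on $(0,1]$. Since the summands above are nonnegative, $\theta_{1,\ell}<\theta_1$ for every $x>0$; thus $f_\ell(x):=x+2\theta_{1,\ell}(x)-2<0$ on $(0,1]$, and as $f_\ell(x)\to+\infty$ its unique positive root $c_{1,\ell}$ lies in $(1,\infty)$.

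The crux is to show that $f_\infty(x):=x+2\theta_1(x)-2>0$ for every $x>1$. For such $x$ the relevant branch gives $w=T(xe^{-x})\in(0,1)$ with $we^{-w}=xe^{-x}$, and multiplying $f_\infty(x)>0$ by $x$ turns it into $(x-1)^2>(1-w)^2$, that is, $x+w>2$. I would establish this as a symmetry property of $h(t)=te^{-t}$: for $s>0$,
$$h(1+s)-h(1-s)=2e^{-1}\big(s\cosh s-\sinh s\big)>0,$$
because $s\cosh s-\sinh s$ vanishes at $0$ and has derivative $s\sinh s>0$. Writing $w=1-s_1$ and $x=1+s_2$ with $s_1\in(0,1)$ and $s_2>0$, the case $s_2\ge1$ gives $s_2>s_1$ at once, while for $s_2<1$ the inequality $h(1-s_2)<h(1+s_2)=h(w)$, combined with the monotonicity of $h$ on $[0,1]$, forces $1-s_2<w=1-s_1$, i.e.\ $s_2>s_1$. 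Either way $x+w=2+(s_2-s_1)>2$. This elementary inequality is the one step I expect to demand genuine care.

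Finally I would conclude by squeezing. Fix $\delta>0$. By the previous step $f_\infty(1+\delta)>0$, and since $\theta_{1,\ell}\uparrow\theta_1$ pointwise, $f_\ell(1+\delta)\to f_\infty(1+\delta)>0$; hence $f_\ell(1+\delta)>0$ for all large $\ell$. As $f_\ell<0$ throughout $(0,c_{1,\ell})$ (its only positive root being $c_{1,\ell}$, and $f_\ell<0$ on $(0,1]$), this forces $c_{1,\ell}\le1+\delta$ for all large $\ell$, so $\limsup_\ell c_{1,\ell}\le1$. Together with $c_{1,\ell}>1$, which gives $\liminf_\ell c_{1,\ell}\ge1$, this yields $\tilde c_1=\lim_\ell c_{1,\ell}=1$.
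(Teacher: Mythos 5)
Your proof is correct, and its first half follows the paper's route exactly: you identify $\dim Z^{0}_{(\ell)}(Y)$ with the number of connected components on at most $\ell$ vertices, compute $\theta_{1,\ell}(x)=\sum_{k\le\ell}\frac{k^{k-2}}{k!}x^{k-1}e^{-kx}$ by a first-moment count in which only tree components survive at scale $n$ (you justify discarding non-tree components, which the paper does silently), and pass to the limit with the classical tree-series identities (the paper writes $R$ for your rooted series $T$, and $T$ for your unrooted series $U$). Where you genuinely depart from the paper is the endgame, and there your version is the more complete one. The paper evaluates its limiting equation $x+2T(xe^{-x})/x=2$ at $x=1$ and declares $\tilde{c}_1=1$ to be its unique solution; as your computation shows, that assertion is literally false, since $R(xe^{-x})=x$ for $0<x\le1$ makes the equation hold identically on all of $(0,1]$. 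What is actually needed, and what the paper leaves unsaid, is: (i) every root satisfies $c_{1,\ell}>1$, which you deduce from the strict inequality $\theta_{1,\ell}<\theta_1$; and (ii) $x+2\theta_1(x)-2>0$ for every $x>1$, so that the roots cannot accumulate at any point above $1$. Your reduction of (ii) to the inequality $x+w>2$, where $w<1$ is the conjugate root of $we^{-w}=xe^{-x}$, proved via $h(1+s)>h(1-s)$ for $h(t)=te^{-t}$, is exactly the missing lemma; and your squeeze, which needs only monotone pointwise convergence of $\theta_{1,\ell}$ at the single point $1+\delta$, avoids any appeal to uniform convergence when passing from the roots $c_{1,\ell}$ to their limit. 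In short, you use the same decomposition and the same generating-function identities, but you supply the root analysis that the paper's one-line conclusion glosses over; your write-up is the rigorous version of the paper's sketch.
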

\noindent
{\bf Proof:} For a subtree $K=(V_K,E_K)$ on the vertex set $V_K \subset [n]$ let
$A_K$ denote all graphs $G \in G(n,p)$ that contain $K$ as an induced subgraph and contain no edges in the cut $(V_K,\overline{V_K})$. The space of $0$-cocycles $Z^0(K)$ is $1$-dimensional and is spanned by the indicator function of $V_K$.
As in Claim \ref{cocycles} it is clear that for
$G \in G(n,p)$ and a fixed $\ell$
$$
Z^{0}(G) \supset \bigoplus_{\{K: |V_K| \leq \ell~and~G \in A_K\}}  Z^{0}(K).
$$
Hence for $p=\frac{x}{n}$
$$E[\dim Z_{(\ell)}^0(G)] =\sum \{\prob[A_K]~:~K~is~a~tree~on~ \leq \ell ~vertices\}=$$
$$
\sum_{k=1}^{\ell} \binom{n}{k} k^{k-2}(\frac{x}{n})^{k-1} (1-\frac{x}{n})^{k(n-k)+\binom{k-1}{2}} \sim$$
$$n \sum_{k=1}^{\ell}\frac{k^{k-2}}{k!} x^{k-1} e^{-xk}=\frac{n}{x}
\sum_{k=1}^{\ell}\frac{k^{k-2}}{k!} (x e^{-x})^k .$$
Let $T(z)=\sum_{k=1}^{\infty} \frac{k^{k-2}}{k!} z^k$ be the exponential generating function for the number of trees. Then
$$\lim_{\ell \rightarrow \infty} \theta_{1,\ell}(x)=\lim_{\ell,n \rightarrow \infty} \frac{E[\dim Z_{(\ell)}^0(G)]}{n}=
\frac{T(x e^{-x})}{x}.$$
Therefore $\tilde{c}_1=\lim_{\ell \rightarrow \infty} c_{1,\ell}$ is the solution
of the equation
\begin{equation}
\label{teq}
x+ \frac{2T(x e^{-x})}{x}=2.
\end{equation}
Let $R(z)=\sum_{k=1}^{\infty} \frac{k^{k-1}}{k!} z^k$
be the exponential generating function for the number of rooted trees.
It is classically known (see e.g. \cite{Stanley}) that $R(z)=z\exp(R(z))$, and that $T(z)=R(z)-\frac{1}{2}R(z)^2$.
It follows that $R(e^{-1})=1$ and $T(e^{-1})=\frac{1}{2}$. Hence
$\tilde{c}_1=1$ is the unique solution of (\ref{teq}).
{\enp}

\section{The Random $d$-Tree Process}
\label{s:tree}

A simplicial complex $T$ on the vertex set $V$ with $|V|=\ell \geq d$ is a {\it $d$-tree}
if there exists an ordering  $V=\{v_1,\ldots,v_{\ell}\}$ such that $\lk(T[v_1,\ldots,v_{i}],v_i)$
is a $(d-1)$-dimensional simplex for all $d+1 \leq i \leq \ell$.
Let $G_T$ denote the graph
with vertex set $T(d-1)$, whose edges are the pairs $\{\tau_1,\tau_2\}$
such that $\tau_1 \cup \tau_2 \in T(d)$. Let $dist_T(\tau_1,\tau_2)$ denote the distance between $\tau_1$ and $\tau_2$ in the graph $G_T$.

A {\it rooted $d$-tree} is a pair $(T,\tau)$ where $T$ is a $d$-tree and $\tau$ is some $(d-1)$-face of $T$. Let $\tau$ be a fixed $(d-1)$-simplex. Given  $k \geq 0$ and $\gamma >0$ we describe a random process that gives rise to a probability space $\tdkl$ of all $d$-trees $T$ rooted at $\tau$ such that $dist_T(\tau,\tau') \leq k$ for all $\tau' \in T$.
The definition of $\tdkl$ proceeds by induction on $k$. $\td(0,\gamma)$ is the $(d-1)$-simplex $\tau$.
Let $k \geq 0$. A $d$-tree in $\td(k+1,\gamma)$ is generated as follows: First generate a $T \in \tdkl$ and let $\cu$ denote all $\tau' \in T(d-1)$  such that $dist_T(\tau,\tau')=k$.
Then, independently for each $\tau' \in \cu$, pick $J=J_{\tau'}$ new vertices $z_1,\ldots,z_J$ where $J$ is Poisson distributed with parameter $\gamma$, and add the $d$-simplices $z_1\tau', \ldots,z_J\tau'$ to $T$.

We next define the operation of pruning of a rooted $d$-tree
$(T,\tau)$. Let $\{\tau_1,\ldots,\tau_{\ell}\}$ be the set of all free $(d-1)$-faces of $T$ that are distinct from $\tau$, and let $\sigma_i$ be the unique $d$-simplex of $T$ that contains $\tau_i$. The $d$-tree $T'$ obtained from $T$ by removing the simplices $\tau_1,\sigma_1,\ldots,\tau_{\ell},\sigma_{\ell}$ is called the {\it pruning of} $T$.
Clearly, any $T \in \td(k+1,\gamma)$ collapses to its root $\tau$  after at most $k+1$ pruning steps.
Denote by  ${\cal C}_d(k+1,\gamma)$ the event that $T \in \td(k+1,\gamma)$ collapses to $\tau$ after at most $k$ pruning steps, and let $\rho_d(k,\gamma)=\prob[{\cal C}_d(k+1,\gamma)]$. Clearly, $\rho_d(0,\gamma)$ is the probability that $T \in \td(1,\gamma)$ consists only of $\tau$, hence
\begin{equation}
\label{kzero}
\rho_d(0,\gamma)=e^{-\gamma}.
\end{equation}
Let $\sigma_1,\ldots,\sigma_j$ denote the $d$-simplices of $T \in \td(k+1,\gamma)$ that contain $\tau$
and for each $1 \leq i \leq j$ let $\eta_{i1},\ldots,\eta_{id}$ be the $(d-1)$-faces of $\sigma_i$ that are different from $\tau$. Let $T_{i \ell} \in \tdkl$ denote the subtree of $T$ that grows out of
$\eta_{i \ell}$. Clearly, $T$ collapses to $\tau$ after at most $k$ pruning steps iff for each $1 \leq i \leq j$, at least one of the $d$-trees $T_{i \ell}$ collapses to its root $\eta_{i \ell}$ in at most $k-1$ steps.
We therefore obtain the following recursion:
$$
\rho_d(k,\gamma)=\sum_{j=0}^{\infty}\prob~[J=j] (1-(1-\rho_d(k-1,\gamma))^d)^j=
$$
$$
\sum_{j=0}^{\infty}\frac{\gamma^j}{j!} e^{-\gamma} (1-(1-\rho_d(k-1,\gamma))^d)^j=
$$
\begin{equation}
\label{recur}
\exp(-\gamma(1-\rho_d(k-1,\gamma))^d)~.
\end{equation}
Equations (\ref{kzero}) and (\ref{recur}) imply that the sequence $\{\rho_d(k,\gamma) \}_k$
is non-decreasing and converges to $\rho_d(\gamma) \in (0,1]$, where $\rho_d(\gamma)$ is the smallest positive solution of the equation
\begin{equation}
\label{eqcoll}
u_d(\gamma,x)=\exp(-\gamma (1-x)^d)-x=0.
\end{equation}
If $\gamma \geq 0$  is small, then  $\rho_d(\gamma)=1$.
Let $\gamma_d$ denote the infimum of the set of nonnegative $\gamma$'s for which
$\rho_d(\gamma)<1$. The pair $(\gamma,x)=(\gamma_d,\rho_d(\gamma_d))$ satisfies
both $u_d(\gamma_d,\rho_d(\gamma_d))=0$ and
$\frac{\partial u_d}{\partial x}(\gamma_d,\rho_d(\gamma_d))=0$.
A straightforward computation shows that $\gamma_d=(d x(1-x)^{d-1})^{-1}$
where $x=\rho_d(\gamma_d)$ is the unique solution of $\exp(-\frac{1-x}{dx})=x$.

\section{The Number of Non-$d$-Collapsible Complexes}
\label{s:noncoll}

When we discuss $d$-collapsibility, we only care about the inclusion relation
between $d$-faces and $(d-1)$-faces. Therefore, in the present section we can and will simplify matters
and consider only the complex that is induced from our (random) choice of $d$-faces.
Namely, for every $i \le d$, a given $i$-dimensional face belongs to the complex iff
it is contained in some of the chosen $d$-faces.

A complex is a {\em core} if every $(d-1)$-dimensional face belongs to at least
two simplices, so that {\em not even a single} collapse step is possible.

A core complex is called a {\em minimal core} complex if none of its proper subcomplexes is a core.

The main goal of this section is to show that with almost certainty
there are just two types of
minimal core subcomplexes that a sparse random complex can have. It can either be
the boundary of a $(d+1)$-simplex, $\partial\Delta_{d+1}$, or
it must be very large. Obviously this implies that there are no small non-collapsible subcomplexes which do not contain
the boundary of a $(d+1)$-simplex.

\begin{theorem}
\label{subcmplx}
For every $c>0$ there exists a constant $\delta=\delta(c)>0$ such that
a.a.s. every minimal core  subcomplex $K$ of $Y \in Y_d(n,\frac{c}{n})$
with $f_d(K) \le \delta n^d$, must contain the boundary of a $(d+1)$-simplex.
\end{theorem}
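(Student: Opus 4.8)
The plan is to bound the expected number of minimal core subcomplexes of each intermediate size by a first-moment (union bound) argument, and to show that this expectation is $o(1)$ for all sizes up to $\delta n^d$, provided we exclude the one unavoidable small core, namely $\partial\Delta_{d+1}$. The key structural input is that a minimal core is quite dense: since every $(d-1)$-face of a core lies in at least two $d$-simplices, a core $K$ on $f_d(K)=s$ faces of dimension $d$ must have $f_{d-1}(K)\le \frac{(d+1)s}{2}$, because each of the $s$ top faces contributes $d+1$ incidences to $(d-1)$-faces and each $(d-1)$-face absorbs at least two such incidences. Minimality should be used to sharpen this: in a \emph{minimal} core one cannot delete any $d$-face without creating a free $(d-1)$-face, which forces the inequality above to be essentially tight, i.e. $f_{d-1}(K)=\frac{(d+1)s}{2}+O(1)$, so that almost every $(d-1)$-face of $K$ has degree exactly two. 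This is the analogue of the fact that a minimal $2$-core in a graph is a cycle.

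First I would set up the counting. For a fixed abstract complex $K$ with $s$ top faces spanned by $v=v(K)$ vertices, the probability that a specific labelled copy of $K$ appears in $Y\in Y_d(n,\frac{c}{n})$ is $p^{s}=(c/n)^{s}$, and the number of ways to embed $K$ into $[n]$ is at most $n^{v}$. Hence the expected number of copies of cores on $s$ top faces is at most
\begin{equation}
\label{firstmom}
\sum_{K} n^{v(K)}\Bigl(\frac{c}{n}\Bigr)^{s}
= \sum_{K} c^{s}\, n^{\,v(K)-s},
\end{equation}
where the inner sum ranges over isomorphism types of minimal cores with $s$ top faces. The crux is therefore the comparison between $v(K)$ and $s$: I expect that for any core, $v(K)\le s + c'$ for a small constant, and more precisely that the \emph{excess} $s-v(K)$ grows with $s$ except in the single degenerate case. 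Using the incidence count above together with the fact that $f_{d-1}(K)\ge f_{d-2}(K)\ge\cdots$ down to $f_0(K)=v$, and that a connected $d$-complex satisfies an Euler-type inequality, one gets $v(K)\le s+1$ with equality characterising $\partial\Delta_{d+1}$ and near-equality characterising the large cores. Substituting $v(K)-s\le 0$ (strictly negative once we are past the boundary sphere) into \eqref{firstmom} makes each term decay like a negative power of $n$, and the remaining task is to control the number of isomorphism types, which is at most $\exp(O(s\log s))$ or so.

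The main obstacle, and where the real work lies, is in ruling out the \emph{intermediate} range: small-but-nonconstant minimal cores with $s$ between a large constant and $\delta n^d$. For bounded $s$ the union bound in \eqref{firstmom} is immediate once one knows $v(K)\le s$, with $\partial\Delta_{d+1}$ as the only borderline case; but for $s\to\infty$ the entropy factor $\exp(O(s\log s))$ counting complex-types can compete with the gain $n^{v-s}$ from \eqref{firstmom}, and one must show the density bound on minimal cores yields a per-face deficit $v(K)-s\le -\beta s$ for some $\beta>0$, or alternatively that the number of cores on $s$ faces is itself only $\exp(o(s))\cdot n^{?}$. I would attack this by a more careful accounting of the $(d-1)$-faces: each step of growing the complex by one $d$-simplex must, in a minimal core, reuse at least two previously present $(d-1)$-faces (this is exactly what prevents collapse and what minimality forces), so the number of \emph{new} vertices per added $d$-simplex is strictly less than one on average, giving the needed linear-in-$s$ deficit. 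Choosing $\delta=\delta(c)$ small enough that $c^{s}\,n^{-\beta s}$ times the number of embeddings is summable and $o(1)$ over the whole range $s\le\delta n^d$ then completes the argument. The delicate point throughout is isolating $\partial\Delta_{d+1}$, the unique minimal core in which every $(d-1)$-face has degree exactly two and which has the maximal vertex-to-face ratio, as the sole exception to the deficit bound.
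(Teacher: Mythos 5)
Your overall framework---a first-moment bound over minimal cores, with $\partial\Delta_{d+1}$ isolated as the unique small exception---is the same as the paper's, and your treatment of the small range is essentially the paper's own: for a minimal core $K$ with $s\ge d+3$ simplices that is not $\partial\Delta_{d+1}$ one can indeed prove a linear vertex deficit (the paper gets $v(K)\le\frac{d+3}{d+4}s$ from two facts: every vertex of a core lies in at least $d+1$ simplices, and a simplex containing two vertices of degree exactly $d+1$ forces a copy of $\partial\Delta_{d+1}$), and this makes the union bound work for $s$ up to polylogarithmic size. Two small corrections there: for a core the correct bound is $v(K)\le s$, not $s+1$ (the sphere has $v=s=d+2$), and with $v\le s$ alone each term of your sum is $O(c^s)$ rather than $o(1)$, so even for constant $s$ you need the strict deficit.

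The genuine gap is the range $n^{1/d}\lesssim s\le\delta n^d$, which is exactly the part you defer, and the fix you propose---a per-face deficit $v(K)-s\le-\beta s$---cannot close it, for a quantitative reason. With $v\le s$ vertices, the number of isomorphism types of complexes with $s$ simplices is bounded only by $\binom{\binom{v}{d+1}}{s}\le\bigl(es^{d}\bigr)^{s}=\exp\bigl(\Theta(ds\log s)\bigr)$, so your first-moment term is at best
\begin{equation*}
\bigl(es^{d}\bigr)^{s}\,n^{v-s}c^{s}\;\le\;\bigl(ec\,s^{d}n^{-\beta}\bigr)^{s},
\end{equation*}
which is $o(1)$ only when $s^{d}<n^{\beta}/(ec)$, i.e.\ $s\lesssim n^{\beta/d}\le n^{1/d}$. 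Since necessarily $\beta\le 1$, no vertex-deficit statement, however strong, gets you past $s\approx n^{1/d}$, let alone to $\delta n^{d}$: the isomorphism-type entropy swamps the gain $n^{-\beta s}$ once $s$ is polynomially large. The alternative you mention in passing---that the number of minimal cores with $s$ simplices is only about $n^{(1+o(1))s}(\text{small constant})^{s}$ rather than $\exp(\Theta(s\log s))\cdot n^{v}$---is precisely what is needed, but you give no argument for it; your ``reuse two old $(d-1)$-faces'' observation yields the vertex deficit again, not a bound on the number of core \emph{types}. Obtaining that count is the heart of the paper's proof (Lemma~\ref{l:1}), and notably it uses no vertex deficit at all: one pre-specifies the set of ``heavy'' $(d-2)$-faces, grows the core one simplex at a time, and extends at each step a \emph{canonically chosen} unsaturated $(d-1)$-face (lexicographically minimal with respect to its vector of heavy-subface counts), which eliminates the $\exp(\Theta(s\log s))$ entropy of deciding where to attach the next simplex; Claim~\ref{d_cube} then shows that at least one step in every $d^{3}$ is ``saving'', i.e.\ the new vertex has only $O(\delta^{1/d}n)$ choices instead of $n$. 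This gives $\C_d(n,s)\le n^{o(s)}\,n^{s}\,(2^{d+1}d^{3}\delta^{1/d^{4}})^{s}$, and choosing $\delta$ so that $2^{d+1}d^{3}\delta^{1/d^{4}}c<\frac{1}{2}$ makes the first moment summable over the whole range $(d^{3}\log n)^{d}\le s\le\delta n^{d}$. Without an ingredient of this kind, your argument does not cover the stated range of $f_d(K)$.
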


Henceforth we use the convention that {\em faces} refer to arbitrary
dimensions, but unless otherwise specified, the word {\em simplex} is reserved to
mean a $d$-face.

Our proof uses the first moment method. In the main step
of the proof we obtain an upper bound
on $\C_d(n,m)$, the number of all minimal core
$d$-dimensional complexes on vertex set $[n]=\{1,2,\dots,n\}$,
which contain $m$ simplices.

Two simplices are considered {\em adjacent}
if their intersection is a $(d-1)$- face.
If $A \stackrel{\cdot}{\cup} B$ is a splitting of a minimal core complex,
then there is a simplex in $A$ and one in $B$ that are adjacent,
otherwise the corresponding subcomplexes are cores as well.
Therefore $K$ can be constructed by successively adding a simplex that is adjacent to an already
existing simplex. This consideration easily yields an upper bound of $n^{d+m}$
on $\C_d(n,m)$. The point is that if $m= \delta n^d$ for $\delta > 0$
small enough, we get an exponentially smaller (in $m$) upper bound
and this is crucial for our analysis.

\begin{lemma}\label{l:1}
Let $m= \delta n^d$ and $\delta>0$ small enough. Then
\begin{equation}\label{e4}
\C_d(n,m)
\le {n^{d-1}\choose (d^{2}m)^{\frac{d-1}{d}}}n^d n^m (2^{d+1}d^3\delta^{\frac{1}{d^4}})^m .
\end{equation}
\end{lemma}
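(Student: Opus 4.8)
The goal is to bound the number $\C_d(n,m)$ of minimal core complexes on $[n]$ with exactly $m$ $d$-simplices, for $m=\delta n^d$. The plan is to realize each such complex by a growth process: start from one simplex and repeatedly attach a new simplex adjacent (sharing a $(d-1)$-face) to the existing structure. This connectivity is guaranteed by the minimality/core argument already sketched in the excerpt (if a splitting $A\,\dot\cup\,B$ had no adjacent pair across it, both parts would be cores, contradicting minimality). So I would first fix the set $S$ of $(d-1)$-faces appearing in $K$, then count the ways to grow $K$ by choosing, at each step, which existing $(d-1)$-face to hang the new simplex from and which new vertex to use.

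The key refinement over the crude $n^{d+m}$ bound is to control the number of distinct $(d-1)$-faces, call it $s=f_{d-1}(K)$. The core condition forces every $(d-1)$-face to lie in at least two simplices, so a double count of incidences gives $2s \le (d+1)m$, i.e. $s \le \frac{d+1}{2}m$; more importantly, to minimize the count we want to exploit that the $s$ $(d-1)$-faces must be supported on relatively few vertices. I would argue that the vertex set of $K$ has size at most roughly $(d^2 m)^{1/d}$ (each new simplex introduces at most one new vertex after the first $d+1$, and a core on $m$ simplices cannot spread over too many vertices because each vertex must participate in enough faces). This is what produces the binomial factor $\binom{n^{d-1}}{(d^2m)^{(d-1)/d}}$: the $(d-1)$-faces all live inside a vertex set of size about $(d^2m)^{1/d}$, so there are at most $\binom{n^{d-1}}{(d^2m)^{(d-1)/d}}$ ways to choose which $(d-1)$-faces are used. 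The factor $n^d$ accounts for the initial simplex, and $n^m$ for the choice of at most one new vertex per attachment step.

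The sharp saving $(2^{d+1}d^3\delta^{1/d^4})^m$ should come from the attachment step: once the pool of available $(d-1)$-faces is confined to a vertex set of size $\Theta((d^2m)^{1/d})=\Theta(\delta^{1/d} n)$, when we attach the $i$-th simplex we choose one of the existing $(d-1)$-faces to extend (there are $O(d\cdot m)$ of them, not $n^{d-1}$) and one new vertex. Comparing the number of $(d-1)$-faces inside the small vertex set against the full ambient count $\binom{n}{d}$ yields a per-step ratio that is a fixed power of $\delta$; collecting over $m$ steps gives $\delta^{m/d^4}$ after bookkeeping, with the $2^{d+1}d^3$ absorbing the combinatorial choices of how a new simplex's $(d-1)$-faces overlap the existing complex (there are at most $2^{d+1}$ subsets of the $d+1$ facets and $O(d^3)$ orderings/labelings).

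The main obstacle will be making the confinement estimate rigorous: proving that a minimal core complex with $m$ simplices genuinely lives on a vertex set of size $O((d^2m)^{1/d})$, and then correctly matching the growth process to this bound so that the choices at each step are counted against the small pool rather than the whole of $[n]$. Getting the exponent $\frac{1}{d^4}$ on $\delta$ exactly right, and verifying that the binomial prefactor dominates the vertex-selection cost while the $\delta^{m/d^4}$ term beats the $n^m$ term for small $\delta$, is where the delicate accounting lies; the connectivity and core double-counting facts are comparatively routine once set up.
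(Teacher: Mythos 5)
There is a genuine gap: your proposal rests on a structural claim that is false. You assert that a minimal core complex with $m$ simplices lives on a vertex set of size roughly $(d^2m)^{1/d}$, and everything else (the binomial prefactor, the per-step saving) is derived from that confinement. But minimal cores can have many more vertices than this. Already in the paper's own proof of Theorem \ref{subcmplx} the bound used for small $m$ is $\frac{d+3}{d+4}m$ vertices, i.e.\ \emph{linear} in $m$, and this order is achievable: for $d=2$, any connected triangulated closed surface is a minimal core (every edge lies in exactly two triangles, and connectivity of the dual graph shows no proper subcomplex is a core), and by Euler's formula a genus-$g$ triangulation on $v$ vertices has $2v-4+4g$ triangles. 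Taking $v=n$ and $g\approx \delta n^2/4$ produces a minimal core on $[n]$ with $n$ vertices and $m=\delta n^2$ triangles, whereas your claimed bound is $(4m)^{1/2}=2\sqrt{\delta}\,n \ll n$. Your reading of the binomial factor is inconsistent with the statement for the same reason: $(d^2m)^{\frac{d-1}{d}}=(d^2\delta)^{\frac{d-1}{d}}n^{d-1}$, while a core with $m=\delta n^d$ simplices has $\Theta(n^d)$ faces of dimension $d-1$, so that factor cannot be ``the choice of which $(d-1)$-faces are used.'' In the paper it counts something else entirely: the advance choice of the set of \emph{heavy $(d-2)$-faces} (those lying in at least $bn$ $(d-1)$-faces, $b=(d(d+1)\delta/2)^{1/d}$), of which there are at most $b^{d-1}n^{d-1}$.

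Even granting connectivity and the growth process, your accounting of the attachment steps does not close. If each step may choose its base $(d-1)$-face from the $O(dm)=O(n^d)$ existing faces, the process costs $n^{\Theta(dm)}$, hopelessly above the $n^m$ budget in \eqref{e4}; and if instead you pre-specify the set of $(d-1)$-faces of the complex, that costs $\binom{\binom{n}{d}}{\Theta(m)}=\left(\Theta(1/\delta)\right)^{\Theta(dm)}$, which swamps the saving $\delta^{m/d^4}$ you are trying to collect (the loss has exponent $\Theta(dm)\log(1/\delta)$ against a gain of only $\frac{m}{d^4}\log(1/\delta)$). The paper's proof is built precisely to avoid both traps: the base face is never chosen at all, but determined by a fixed selection rule (lexicographically minimal with respect to the vector counting heavy subfaces in each dimension), so it contributes no entropy; only the heavy $(d-2)$-faces are specified in advance, at cost $e^{o(m)}$; and the factor $\delta^{m/d^4}$ comes from a genuinely different mechanism --- the classification of steps as \emph{saving} (at most $d^dbn$ vertex choices) or \emph{wasteful} (at most $n$ choices), together with the combinatorial Claim \ref{d_cube} that among every $d^3$ consecutive steps of the process at least one must be saving, which is where the hard work (Propositions \ref{prop1} and \ref{prop2} on primary faces) lies. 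None of this machinery, nor a substitute for it, appears in your outline, so the key inequality remains unproved.
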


\begin{proof}

Let $b=\left(\frac{d(d+1)\delta}{2}\right)^{\frac{1}{d}}$.
A $(d-2)$-face is considered \emph{heavy} or \emph{light}
depending on whether it is covered by at least
$bn$ $(d-1)$-faces or less. The sets of heavy and light
$(d-2)$-faces are denoted by
$H_{d-2}$ and $L_{d-2}$ repectively. We claim that $|H_{d-2}|\le b^{d-1}n^{d-1}$.
To see this note that each simplex contains
exactly $d+1$ $(d-1)$-faces, but the complex is a core, so that each
$(d-1)$-face is covered at least twice. Consequently, our complex has
at most $\frac{m(d+1)}{2}$ $(d-1)$-faces.
Likewise, each $(d-1)$-face contains $d$ $(d-2)$-faces.
Each heavy $(d-2)$-face is covered at least $bn$ times and the claim follows by the
following calculation:

$$
|H_{d-2}|\le \frac{d(d+1)m}{2bn}=\frac{d(d+1)\delta n^d}{2bn}=\frac{d(d+1)\delta n^{d-1}}{2b}=b^{d-1}n^{d-1}.
$$

We extend the heavy/light dichotomy to lower dimensions as well.
For each $0\le i\le d-3$, an $i$-face is considered \emph{heavy}
if it covered by at least $b\cdot n$ \emph{heavy }$(i+1)$-faces.
Otherwise it is \emph{light}. The sets of heavy/light $i$-faces are denoted
by $H_{i}$ resp. $L_{i}$. By counting inclusion relations between heavy faces
of consecutive dimensions it is easily seen that $|H_i| \le \frac{(i+2)|H_{i+1}|}{bn}$
which yields
$$
|H_{i}|\le \frac{(d-1)!}{(i+1)!}b^{i+1}n^{i+1}.
$$

The set of $i$-dimensional heavy (resp. light) faces
contained in a given face $\sigma$ is denoted
by $H_i^{\sigma}$ be (resp. $L_i^{\sigma}$).

The bulk of the proof considers a sequence of complexes $C_1,\dots, C_m=C$, where the complex
$C_{i}$ is obtained from $C_{i-1}$ by adding a single simplex. A $(d-1)$-face $\sigma$ of $C_i$ can be {\em saturated} or {\em unsaturated}. This depends on whether or not
every simplex in $C_m$ that contains $\sigma$ already belongs to $C_i$. Prior to defining the complexes $C_i$,
we specify the set of heavy $(d-2)$-faces in one of
at most ${{n\choose {d-1}}\choose b^{d-1}n^{d-1}}$ possible ways. Note that this choice uniquely determines
the sets of heavy faces for every dimension $0\le i\le d-3$.
We start off with the complex $C_{1}$, which has exactly one
simplex. Clearly there are ${n\choose d+1}$ possible choices for $C_1$.
We move from $C_{i-1}$ to $C_{i}$ by adding a single simplex $t_{i}$,
which covers a \emph{chosen} unsaturated $(d-1)$-face $\sigma_{i-1}$ of $C_{i-1}$.
Our choices are subject to the condition
that every heavy $(d-2)$-face in $C_m$ is one of the heavy $(d-2)$-faces
chosen prior to the process. In other words, we must never make choices that create
any additional heavy faces in addition to those derived from our preliminary choice.
Our goal is to bound the number of choices for this process.

The crux of the argument is a rule for selecting the chosen face.
Associated with every face is a vector counting the number of its heavy vertices, its heavy edges,
its heavy 2-faces etc. The chosen face is always lexicographically minimal w.r.t. this vector,
breaking ties arbitrarily.
A $(d-1)$-face all of whose subfaces are light is called \emph{primary}.

In each step $j$ we expand a $(d-1)$-face $\sigma$ to a simplex $\sigma \cup y$.
Such a step is called a {\em saving} step if either:
\begin{enumerate}
\item The vertex $y$ is heavy.
\item There exists a light $(d-2)$- subface $\tau\subset\sigma$ such that $\tau\cup y$
is contained in a simplex in $C_{j-1}$.
\item There exists a light subface $\tau\subset \sigma$ such that the face $\tau\cup y$ is heavy.
\end{enumerate}

Note that the number
of choices of $y$ in the first case is $\le |H_0| \le (d-1)!\cdot b\cdot n$.
In the second case the number of choices for $y$ is at most $d bn$.
In the third case there are $d-2$ possibilities for the dimension of the light face and for each such dimension $i$
there are at most $ {d \choose i+1} bn$ choices for $y$.
In all cases the number of choices for $y$ is at most $\le d^{d}bn$.
A step that is not saving is considered {\em wasteful}. For wasteful steps we bound the number of
choices for $y$ by $n$.\\

The idea of the proof is that every
such a process which produces a minimal core complex
must include many saving steps.
More specifically, we want to show:

\begin {claim}
\label{d_cube}
For every $d^3$ wasteful steps, at least one saving step is carried out.
\end{claim}
\begin{proof}


The proof of this claim consists of two steps.
We show that there is no sequence of $d(d-1)$ consecutive wasteful steps, without the creation of an
unsaturated primary face.
Also, the creation of $d+1$ primary faces necessarily involves a saving step.

If $u$ is a vertex in a $(d-1)$-face $\sigma$, let $r^{\sigma}_i(u)$ be the number of heavy
$i$-faces in $\sigma$ that contain $u$. Also, $V_i^{\sigma}$ denotes the set of vertices $v$ in
$\sigma$ that are included only in light $i$-subfaces of $\sigma$.

\begin{proposition}
 \label{prop1}
Let $\sigma$ and $\sigma'$ be two consecutively chosen faces where $\sigma$ is non-primary and the extension step
on $\sigma$ is wasteful. Then $\sigma'$ precedes $\sigma$ in the order of faces and $|V_i^{\sigma'}|\ge |V_i^{\sigma}|+1$
where $i$ is the smallest dimension for which $|H_i^{\sigma}|>0$.
\end{proposition}
\begin{proof}

Since the extension step on $\sigma$ is wasteful (and,
in particular, not a saving step of type (iii)) and since all
$j$-subfaces of
$\sigma$ are light for $j<i$, every $j$-face in $\sigma \cup y$ is light.
Moreover, every $i$-subface of $\sigma \cup y$ that contains $y$ is light as well.

We claim that $\sigma'=\sigma \setminus \{u\} \cup \{y\}$
where the vertex $u$ of $\sigma$ maximizes $r^{\sigma}_i(v)$.
(Since $|H_i^{\sigma}|>0$, there are vertices $v$
in $\sigma$ for which $r^{\sigma}_i(v) > 0$).

Notice that $\sigma$ has ${d-1 \choose i}-r^{\sigma}_i(u)$
more light $i$-subfaces than does
$\tau^u:=\sigma\setminus u$. Namely,
$|L_i^{\tau^u}|= |L_i^{\sigma}|-{d-1 \choose i}+r^{\sigma}_i(u)$.

Combining the fact that every $i$-subface of $\sigma \cup y$ that contains $y$ is light
we see that in $\tau^u_y:=\tau^u\cup y$,
$|L_i^{\tau^u_y}|=|L_i^{\tau^u}|+{d-1\choose i}=|L_i^{\sigma}|+r^{\sigma}_i(u)$.
But since $r^{\sigma}_i(u)>0$, $\tau_y^u$ precedes $\sigma$. In this case $\tau^u_y$ must be a new face
that does not belong to the previous complex, or else it would have been preferred over $\sigma$.
Being a new face, it is
necessarily unsaturated. Since $u$ maximizes $r^{\sigma}_i(u)$ over all vertices in $\sigma$, it follows
that $\tau^u_y$ precedes all other faces created in the expansion. Furthermore, no other face precedes
$\sigma$ or else it would be chosen rather than $\sigma$. Thus $\sigma'=\tau^{y}_{u}$, as claimed.
Notice  that $y\in V_i^{\sigma'}$ and also
$V_i^{\sigma}\subseteq V_i^{\sigma'}$ (Note that every $\emph{i}$-dimensional subfaces of $\sigma'$ that is not contained in
$\sigma$ is light since it contains $y$). Thus $|V_i^{\sigma'}|\ge |V_i^{\sigma}|+1$.
\end{proof}

Consider a chosen non-primary face $\sigma$ and let $i$ be the smallest dimension for which $|H_i^{\sigma}|>0$.
The previous claim implies that
after at most $d$ consecutive wasteful steps the chosen face, $\theta$ precedes $\sigma$ and $|V_i^{\theta}|=d$.
Then $|H_j^{\theta}|=0$ for all $j \le i$ (in particular $|H_i^{\theta}|=0$). By repeating this argument $d-1$ times
we conclude that following every series
of $d(d-1)$ consecutive wasteful steps, a primary face must be chosen:
After at most $d$ consecutive wasteful steps
the chosen face can have no heavy vertices. At the end of the next $d$ consecutive wasteful steps,
the chosen face has no heavy vertices nor heavy edges.
Repeating this argument $(d-1)$ times necessarily leads us to a chosen primary face.

\begin {proposition}
 \label{prop2}
Only saving steps can decrease the number of unsaturated primary faces.
\end{proposition}
\begin{proof} Let $\sigma=a_1,a_2\dots ,a_d$ be a primary face and let $y$ be the vertex that expands it.
Denote the $(d-1)$-face $\{a_1,a_2\dots,a_{i-1},a_{i+1},\dots ,a_d\}\cup \{y\}$ by $\sigma^i$.
Since this is not a saving step of type (i),
$y$ is light. It is also not of type (iii) and so $|H_k^{\sigma^i}|=0$ for every $i=1,\ldots,d$ and $k=1,\ldots, d-2$,	 
so that faces $\sigma^i$ are primary. However this is not a type (ii) saving step, so
 all the $(d-1)$-faces $\sigma^i$ must be new.
Thus the number of unsaturated primary faces has increased by at least $d-1$.
\end{proof}

The proof of Claim~\ref{d_cube} is now complete, since at each step at most $d+1$ faces get covered.
\end{proof}

We can turn now to bound the number of minimal core $m$-simplex complexes $C_m$.
As mentioned, we first specify the heavy $(d-2)$-faces of $C_m$
by specifying a set of $b^{d-1} n^{d-1}$ out of the total of ${n \choose d-1}$
$(d-2)$-faces. Then we select the first
simplex $C_1$ and mark all its $(d-1)$-faces as unsaturated. In order to choose the $i$-th
step we first decide whether it is a saving or wasteful step, and if it is a saving
step, what type it has. There is a total of $d+1$ possible kinds of extensions of the current
$(d-1)$-face: A saving step of type (i), (ii),
or one of the $d-2$ choices of type (iii) (according to dimension), or a wasteful step.
In a saving step the expanding vertex can be chosen in at most $d^{d}bn$ ways.
The number of possible extension clearly never exceeds $n$ and it is this trivial upper bound that we use
for wasteful steps.
Finally we update the labels on the $(d-1)$-faces
of a new simplex. We need to decide which of the unsaturated $(d-1)$-faces
that are already covered by at least two simplices change their status to saturated.
There are at most $2^{d+1}$ possibilities of such an update. As we saw, at least $\frac{m}{d^3}$ of the steps
in such process are saving steps. Consequently we get the following upper bound on $C_d(n,m)$, the
number of minimal core $n$-vertex $d$-dimensional complexes with $m$ simplices.
(In reading the expression below, note that the terms therein correspond in a one-to-one manner to the ingredients
that were just listed).

$$
{{n\choose d-1}\choose {b^{d-1}n^{d-1}}} n^{d+1} (d+1)^{m-1} n^{m-1}   (d^db)^{\frac{m}{d^3}} (2^{d+1})^m
$$
$$\le {n^{d-1}\choose {(d^{2}\delta)^{\frac{d-1}{d}}n^{d-1}}}n^d n^m ((d+1)2^{d+1}d^{\frac{1}{d^2}}b^{\frac{1}{d^3}})^m\le
$$
$$
\le{n^{d-1}\choose (d^{2}m)^{\frac{d-1}{d}}}n^d n^m (2^{d+1}d^2(d^2\delta)^{\frac{1}{d^4}})^m
\le{n^{d-1}\choose (d^{2}m)^{\frac{d-1}{d}}}n^d n^m (2^{d+1}d^3\delta^{\frac{1}{d^4}})^m
$$
\end{proof}

{\bf Proof of Theorem \ref{subcmplx}:}
We show the assertion
with $\delta=\delta(c)=(2^{d+2}d^3c)^{-d^{4}}$. Indeed, consider a complex drawn from $Y_{d}(n,p)$.
Let $X_m=X_m(n,p)$ count the number
of minimal core subcomplexes with $m$ simplices and which are not copies
of $\partial\Delta_{d+1}$. Our argument splits according to whether $m$ is small or large, the dividing line
being $m=m_1=(d^{3} \log n)^{d}$. The theorem speaks only about the range
$m \le m_2=\delta(c)n^d$.
By Lemma~\ref{l:1},

\begin{align*}
&\sum_{m=m_1}^{m_2}\E X_m\le \sum_{m=m_1}^{m_2}C_{d}(n,m)p^m
\le \sum_{m=m_1}^{m_2} (n^{d-1})^{(d^{2}m)^{\frac{d-1}{d}}} n^d\big(2^{d+1}d^3\delta^{\frac{1}{d^{4}}} c\big)^{m} \\
& \le n^{d} \sum_{m=m_1}^{m_2} \left(n^{d-1}\right)^{(d^{2}m)^{\frac{d-1}{d}}}2^{-m}\le n^{d} \sum_{m=m_1}^{m_2} \left(n^{d-1}2^{-\frac{m^{\frac{1}{d}}}{d^2}}\right)^{(d^{2}m)^{\frac{d-1}{d}}}\\
& \le n^{d} \sum_{m=m_1}^{m_2} \left(\frac{1}{n}\right)^{(d^{2}m)^{\frac{d-1}{d}}}=o(1)\\
\end{align*}

It follows that with almost certainty no cores with $m$ simplices occur where $m_2 \ge m \ge m_1$.
We next consider the range $d+3 \le m \le m_1$. Note that a minimal core complex
with $m\ge d+3$ simplices has at most $\frac{d+3}{d+4}m$ vertices.
Let $\Delta(u)$ denote the number of simplices that contain the vertex $u$.
Clearly, if $\Delta(u)>0$ then $\Delta(u)\ge d+1$ (Consider a simplex $\sigma$ that contains $u$.
Every face of the form $\sigma \setminus w$ with $u \neq w \in \sigma$ is covered by a simplex other than $\sigma$).
It is not hard to verify that if some simplex $\sigma$ contains two distinct vertices with $\Delta(u)=\Delta(w)=d+1$
then the complex contains $\partial\Delta_{d+1}$ contrary to the minimality assumption.
Let $t$ be the number of vertices $u$ with $\Delta(u)=d+1$. No simplex
contains two such vertices, so that $t\le \frac{m}{d+1}$. Counting vertices in the complex
according to the value of $\Delta$ we get

$$(d+1)t +(d+2)(v-t)\le (d+1)m$$
where $v$ is the total number of vertices. The conclusion follows.

The expected number of minimal core subcomplexes of $Y_{d}(n,p)$ that contain
$d+3\le m\le m_1$ simplices satisfies
\begin{align*}
\sum_{m=d+3}^{m_1}\E X_m
&\le \sum_{m=d+3}^{m_1}{n\choose \frac{d+3}{d+4}m}
\Big[\Big(\frac{d+3}{d+4}m\Big)^{d+1} p\Big]^m\le \sum_{m=d+3}^{m_1}n^{ \frac{d+3}{d+4}m}
\Big[m^{d+1} p\Big]^m\\
&\le \sum_{m=d+3}^{m_1}\Big(c^{d+4}\frac{\log^{2d(d+1)(d+4)}n}{n}\Big)^{\frac{m}{d+4}} =o(1).
\end{align*}
Consequently, a.a.s. $Y_{d}(n,p)$ contains no minimal core subcomplexes
 of $m$ simplices with $d+3\le m\le (2^{d+2}d^3c)^{-d^{4}}n^d$.
\enp

\section{The Threshold for $d$-Collapsibility}
\label{s:thold}

For a complex $Y \subset \dn^{(d)}$ and a fixed $\tau \in \dn(d-1)$, define a sequence of complexes
$\{\cs_i(Y)\}_{i \geq 0}$
as follows. $\cs_0(Y)=\tau$ and for $i \geq 1$ let $\cs_i(Y)$ be the union of $\cs_{i-1}(Y)$ and the complex generated by  all  the $d$-simplices of $Y$ that contain some $\eta \in \cs_{i-1}(Y)(d-1)$.
Let $\td$ denote the family of all $d$-trees. Consider the events $A_k, D \subset Y_d(n,p)$ given by
$$A_k=\{\cs_k(Y) \in \td\}$$
and
$$D=\{\deg_Y(\eta) \leq \log n {\rm ~~for~all~~}\eta \in \dn(d-1) \}.$$
\begin{claim}
\label{subtree}
Let $k$ and $c>0$ be fixed and $p=\frac{c}{n}$. Then
$$\prob [A_{k+1} \cap D]=1-o(1).$$
\end{claim}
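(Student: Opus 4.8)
The plan is to prove Claim~\ref{subtree} by a union bound over the ``bad'' events that prevent $\cs_{k+1}(Y)$ from being a $d$-tree or that violate the degree bound $D$. The key structural observation is that $\cs_{k+1}(Y)$ fails to be a $d$-tree precisely when the breadth-first exploration from $\tau$ encounters a \emph{collision}: a $d$-simplex of $Y$ is added whose new $(d-1)$-faces were already present in $\cs_i(Y)$, or equivalently, two distinct faces $\eta,\eta' \in \cs_i(Y)(d-1)$ are joined by a short chain of added simplices creating a cycle in the graph $G_{\cs_{k+1}(Y)}$. So I would show that, with probability $1-o(1)$, the explored complex is ``locally tree-like'' up to distance $k+1$ from $\tau$.

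First I would bound the size of $\cs_{k+1}(Y)$ on the event $D$. Conditioning on $D$, each face has at most $\log n$ cofaces, so the number of faces within distance $k+1$ of $\tau$ is at most $(d \log n)^{O(k)} = \text{polylog}(n)$, since $k$ and $d$ are fixed. This polylogarithmic a~priori bound is what makes the collision events negligible. Next, to establish $D$ itself: for a fixed $\eta \in \dn(d-1)$, $\deg_Y(\eta)$ is $\mathrm{Binomial}(n-d, p)$ with mean $c \cdot \frac{n-d}{n} < c$, and a standard Chernoff/Poisson-tail estimate gives $\prob[\deg_Y(\eta) > \log n] \le n^{-\omega(1)}$; a union bound over the $\binom{n}{d} = O(n^d)$ faces $\eta$ then yields $\prob[D] = 1-o(1)$.

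Then I would estimate $\prob[A_{k+1}^c \cap D]$. On $D$ the exploration touches at most $\text{polylog}(n)$ faces and at most that many vertices; a ``defect'' (a non-tree configuration within distance $k+1$) requires that some newly added $d$-simplex reuse at least two already-revealed vertices rather than introducing a fresh one, or that a single added simplex close a cycle. Each such coincidence costs a factor $p = c/n$ while the number of ways to choose the offending simplex among revealed vertices is at most $\text{polylog}(n)$, so each defect has probability $O(\text{polylog}(n)/n)$. Summing over the $\text{polylog}(n)$ possible locations gives $\prob[A_{k+1}^c \cap D] = O\!\left(\frac{\text{polylog}(n)}{n}\right) = o(1)$. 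Combining, $\prob[A_{k+1} \cap D] \ge 1 - \prob[D^c] - \prob[A_{k+1}^c \cap D] = 1 - o(1)$.

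The main obstacle I anticipate is bookkeeping the collision probability cleanly: one must argue that conditioning on the portion of $Y$ already revealed by the exploration up to step $i$ does not bias the inclusion of simplices on \emph{fresh} vertex sets, so that the exploration genuinely behaves like the Poisson branching process $\tdkl$ of Section~\ref{s:tree}. The standard device is to reveal the $d$-simplices of $Y$ only as the BFS demands them, keeping the unexamined simplices independent with probability $p$; the event $D$ then caps the branching so that the total exposed vertex set stays of size $\text{polylog}(n)$, ensuring that the probability of any two exploration branches meeting, or of a simplex reusing old vertices, is $o(1)$. Getting the conditioning right while retaining independence is the delicate point; everything else reduces to the two tail/union-bound computations sketched above.
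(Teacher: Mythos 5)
Your proposal is correct and follows essentially the same route as the paper: establish $D$ by a Chernoff tail bound plus a union bound over the $O(n^d)$ faces, observe that on $D$ the exploration $\cs_{k+1}(Y)$ has only polylogarithmic size, and then kill the tree-property failures (a simplex $\eta v$ with both $\eta$ and $v$ already revealed) by a union bound over the polylogarithmically many such pairs, each occurring with probability $c/n$. The paper phrases this last step as $(1-\frac{c}{n})^{O(\log^{2k+1} n)} = 1-o(1)$ rather than as an additive union bound, but the two computations are equivalent.
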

\noindent
{\bf Proof:} Fix $\eta \in \dn(d-1)$. The random variable $\deg_Y(\eta)$ has a binomial distribution
$B(n-d,\frac{c}{n})$, hence by the large deviations estimate
$$\prob [\deg_Y(\eta)> \log n] < n^{-\Omega(\log\log n)}.$$
Therefore $\prob [D]=1-o(1)$. If $Y \in D$ then
$f_0(\cs_{k+1}(Y))=O(\log^{k+1} n)$ and $f_{d-1}(\cs_k(Y))=O(\log^k n)$.
Note that $\cs_{k+1}(Y)$ is a $d$-tree iff in its generation process, we never add a simplex of the form $\eta v$ such that both $\eta \in \dn(d-1)$ and $v\in \dn(0)$ already exist in the complex. Since the number of such pairs is at most
$f_0(\cs_{k+1}(Y))f_{d-1}(\cs_k(Y))$ it follows that
$$\prob[A_{k+1} \cap D] \geq (1-\frac{c}{n})^{f_0(\cs_{k+1}(Y))f_{d-1}(\cs_k(Y))} \geq $$
$$
(1-\frac{c}{n})^{O(\log^{2k+1} n)} =1-o(1)~~.$$
{\enp}
For $Y \subset \dn^{(d)}$ let $r(Y)=f_d(R_{\infty}(Y))$  be the number of $d$-simplices remaining in $Y$
after performing all possible $d$-collapsing steps.
For $\tau \in \dn(d-1)$ let $\Gamma(\tau)=\{\sigma \in \dn(d): \sigma \supset \tau\}$.
\begin{claim}
\label{exps}
Let $0<c<\gamma_d$ be fixed and $p=\frac{c}{n}$. Then for any fixed  $\tau \in \dn(d)$:
\begin{equation}
\label{sinr}
\prob[R_{\infty}(Y) \cap \Gamma(\tau) \neq \emptyset]=o(1)~.
\end{equation}
\end{claim}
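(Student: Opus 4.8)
The plan is to reduce the global collapse of $Y$ in a neighborhood of $\tau$ to the pruning of the random $d$-tree process $\td(k+1,c)$ analyzed in Section~\ref{s:tree}, and then exploit the fact that for $c<\gamma_d$ the tree collapses to its root with probability tending to $1$. Fix an integer $k$. By Claim~\ref{subtree}, with probability $1-o(1)$ the event $A_{k+1}\cap D$ holds, so that $\cs_{k+1}(Y)$ is a $d$-tree rooted at $\tau$, all of whose $(d-1)$-faces have degree at most $\log n$. On this event every $(d-1)$-face $\eta$ at distance at most $k$ from $\tau$ has $\deg_Y(\eta)$ equal to its degree inside $\cs_{k+1}(Y)$, since by construction all $d$-simplices of $Y$ meeting such an $\eta$ already lie in $\cs_{k+1}(Y)$. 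Consequently the free $(d-1)$-faces of $\cs_{k+1}(Y)$ at distance at most $k$ are exactly the free faces of $Y$ supported there, and each pruning step of the tree is a legitimate $d$-collapse step of $Y$.

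First I would set up a coupling between $\cs_{k+1}(Y)$, conditioned on $A_{k+1}\cap D$, and a sample from $\td(k+1,c)$. Since $\deg_Y(\eta)$ is distributed as $B(n-d,c/n)$ and the total number of vertices occurring in $\cs_{k+1}(Y)$ is $O(\log^{k+1}n)=o(n)$, the branching at each frontier face converges in total variation to the Poisson$(c)$ branching used to generate $\td(k+1,c)$; hence for each fixed $k$ the law of $\cs_{k+1}(Y)$ is within $o(1)$ in total variation of $\td(k+1,c)$. Under this coupling the event $\C_d(k+1,c)$, that the tree collapses to $\tau$ in at most $k$ pruning steps, translates via the previous paragraph into an explicit collapse sequence of $Y$ that removes every $d$-simplex containing $\tau$; such a sequence only ever prunes faces at distance at most $k$, so it never reaches beyond the region of $\cs_{k+1}(Y)$ whose simplex-neighborhood is fully captured. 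Because the core $R_{\infty}(Y)$ is independent of the order in which collapses are performed, exhibiting one such sequence already shows $R_{\infty}(Y)\cap\Gamma(\tau)=\emptyset$ on this event.

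Combining the two reductions yields, for every fixed $k$,
\begin{equation*}
\prob[R_{\infty}(Y)\cap\Gamma(\tau)\neq\emptyset]\le \prob[(A_{k+1}\cap D)^{c}]+\bigl(1-\rho_d(k,c)\bigr)+o(1)=\bigl(1-\rho_d(k,c)\bigr)+o_n(1).
\end{equation*}
Taking $\limsup_{n\to\infty}$ leaves the bound $1-\rho_d(k,c)$, and since $c<\gamma_d$ gives $\rho_d(c)=1$, letting $k\to\infty$ forces $\limsup_{n}\prob[R_{\infty}(Y)\cap\Gamma(\tau)\neq\emptyset]=0$, which is the assertion.

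The main obstacle I expect is making the coupling of the second paragraph precise: one must verify that, conditioned on $\cs_{k+1}(Y)$ being a tree, its rooted isomorphism type genuinely converges to that of $\td(k+1,c)$ uniformly over the $O(\log^{k+1}n)$ faces involved, controlling both the binomial-to-Poisson passage and the mild dependence created by sampling distinct vertices without replacement. The second delicate point, which I would state explicitly, is the order-independence of $R_{\infty}$ — that a simplex removed by \emph{some} collapse sequence is absent from the canonical core — so that the local pruning certificate really does certify removal by the global collapse.
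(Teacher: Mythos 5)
Your proposal is correct and follows essentially the same route as the paper's proof: condition on $A_{k+1}\cap D$ via Claim~\ref{subtree}, compare $\cs_{k+1}(Y)$ to $\td(k+1,c)$ in total variation (binomial-to-Poisson with $n'=O(\log^{k+1}n)$ vertices exposed), translate the tree-pruning event into collapse steps of $Y$, and let $k\to\infty$ using $\rho_d(k,c)\to\rho_d(c)=1$ for $c<\gamma_d$. The only difference is expository: you make explicit two points the paper leaves as its unproved ``crucial observation'' --- that degrees of faces within distance $k$ of $\tau$ are fully captured inside $\cs_{k+1}(Y)$, and that the core $R_\infty(Y)$ is insensitive to the order of collapse steps --- which is a genuine strengthening of the write-up rather than of the argument.
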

\noindent
{\bf Proof:} Let $\delta>0$. Since $c<\gamma_d$
$$
\lim_{k \rightarrow \infty} \rho_d(k,c)=\rho_d(c)=1.
$$
Choose a fixed $k$ such that $$\rho_d(k,c) >1-\frac{\delta}{3}.$$
Claim \ref{subtree} implies that if $n$ is sufficiently large then
$$\prob[A_{k+1} \cap D] \geq 1-\frac{\delta}{3}.$$
Next note that if $Y \in A_{k+1}$ then $\cs_{k+1}=\cs_{k+1}(Y)$ can be generated by the following inductively defined random process:
$\cs_0=\tau$. Let $0 \leq i \leq k$. First generate $T=\cs_i$ and let
$\cu$ denote all $\tau' \in T(d-1)$  such that $dist_T(\tau,\tau')=i$. Then,
according to (say) the lexicographic order on $\cu$, for each $\tau' \in \cu$
pick $J$ new vertices $z_1,\ldots,z_J$ according to the  binomial distribution $B(n-n',\frac{c}{n})$,
where $n'$ is the number of vertices that appeared up to that point, and add
the $d$-simplices $z_1\tau', \ldots,z_J\tau'$ to $T$.
Note that the process described above is identical to the $d$-tree process of Section \ref{s:tree},
except for the use of the binomial distribution $B(n-n',\frac{c}{n})$ instead of the Poisson distribution $Po(c)$. Now if $Y \in A_{k+1} \cap D$ then $n'=O(\log^{k+1}n)$ at all stages of this process.
It follows that if $n$ is sufficiently large then the total variation distance between the distributions $\cs_{k+1}(Y)$ and $\td(k+1,c)$ is less then $\frac{\delta}{3}$.
Denote by ${\cal C}'_d(k+1,c)$ the event that $\cs_{k+1}(Y)$ is in  $A_{k+1}$ and collapses to $\tau$ in at most $k$
pruning steps.
The crucial observation now is that if $Y \in {\cal C}'_d(k+1,c)$
then $R_{\infty}(Y) \cap \Gamma(\tau) = \emptyset$. It follows that
$$
\prob[R_{\infty}(Y) \cap \Gamma(\tau) \neq \emptyset] \leq $$ $$(1-\prob[A_{k+1} \cap D]) +
\prob[Y \not\in {\cal C}'_d(k+1,c)] \leq
$$
$$
(1-\prob[A_{k+1} \cap D]) +d_{TV}(\cs_{k+1}(Y),\td(k+1,c))+
(1-\prob[{\cal C}_d(k+1,c)]) \leq $$
$$
\frac{\delta}{3}+\frac{\delta}{3}+\frac{\delta}{3}=\delta.$$
{\enp}
Let $$\cg(Y)=\{\tau \in \dn(d-1): \rinf \cap \Gamma(\tau) \neq \emptyset\}$$
and let $g(Y)=|\cg(Y)|$. For a family $\cg \subset \dn(d-1)$ let
$w(\cg)$ denote the set of all $d$-simplices $\sigma \in \dn(d)$ all of whose $(d-1)$-faces are contained in $\cg$. Using Claim \ref{exps} we establish the following
\begin{theorem}
\label{rrinf}
Let $\delta>0$ and $0<c<\gamma_d$ be fixed and let $p=\frac{c}{n}$.
Then
$$\prob[f_d(\rinf)>\delta n^d]=o(1).$$
\end{theorem}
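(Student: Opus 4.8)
**The plan is to bound $f_d(\rinf)$ via its expectation, using the first moment method and the local estimate from Claim \ref{exps}.** The key identity to exploit is that the $d$-simplices surviving all collapses are precisely those whose every $(d-1)$-face meets $\rinf$; that is, $R_\infty(Y) \subseteq w(\cg(Y))$, since a surviving simplex cannot have a free face. Therefore $f_d(\rinf) \le |w(\cg(Y))|$, and it suffices to control $g(Y) = |\cg(Y)|$ together with the combinatorial cost of forming $w$ from $\cg$. First I would compute $\E[g(Y)]$: by linearity over $\tau \in \dn(d-1)$ and Claim \ref{exps}, for each fixed $\tau$ we have $\prob[\tau \in \cg(Y)] = \prob[\rinf \cap \Gamma(\tau) \neq \emptyset] = o(1)$, and crucially this $o(1)$ can be made a uniform $\delta'$ (depending only on $c,d$ and an arbitrarily small target) by choosing the truncation level $k$ large. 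Hence $\E[g(Y)] \le \delta' \binom{n}{d} = o(n^d)$ can be driven below any prescribed fraction of $\binom{n}{d-1}$ as $\delta' \to 0$.

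The next step links $g(Y)$ to $f_d(\rinf)$. Since $\rinf \subseteq w(\cg(Y))$, I want an isoperimetric-type bound asserting that a family $\cg$ of $(d-1)$-faces of size $g$ spans at most $O(g^{d/(d-1)})$ $d$-simplices in $w(\cg)$ — this is the standard Kruskal--Katona / shadow-type estimate, since a $d$-simplex in $w(\cg)$ requires all $d+1$ of its $(d-1)$-faces to lie in $\cg$. Concretely, $|w(\cg)| = O(g^{1 + 1/d})$ in the relevant regime, so that if $g(Y) = o(n^d)$ were merely in expectation we would still need a high-probability statement. The clean route is: fix the small target $\delta$, choose $\delta'$ so small that $\E[g(Y)] \le \frac{\delta'}{2}\binom{n}{d}$, apply Markov's inequality to get $g(Y) \le \delta' \binom{n}{d}$ with probability $1-o(1)$ (taking $\delta'$ below any constant), and then combine with Theorem \ref{subcmplx} to finish.

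The main obstacle is bridging the gap between the \emph{local} (bounded-radius) guarantee of Claim \ref{exps} and the \emph{global} conclusion that $\rinf$ is genuinely small rather than merely locally sparse around a typical $\tau$. Markov on $g(Y)$ only yields that $\cg(Y)$ is a small \emph{fraction} of all $(d-1)$-faces, i.e. $g(Y) \le \delta' \binom{n}{d-1}$; but the set of surviving simplices $w(\cg(Y))$ could a priori still be large if $\cg(Y)$ concentrated into a dense minimal core. This is exactly where Theorem \ref{subcmplx} enters: any minimal core subcomplex of $Y$ with fewer than $\delta(c) n^d$ simplices must contain $\partial\Delta_{d+1}$, and since $R_\infty(Y)$ is itself a core (no free faces remain by construction), if $f_d(\rinf)$ were to lie strictly between the boundary-threshold and $\delta n^d$ we would obtain a forbidden small core. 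The argument I would run: if $0 < f_d(\rinf) \le \delta n^d$, then $R_\infty(Y)$ contains a minimal core, which by Theorem \ref{subcmplx} contains a copy of $\partial\Delta_{d+1}$; but $\partial\Delta_{d+1}$ itself forces a bounded nonvanishing local structure that contradicts the Claim \ref{exps} estimate applied to one of its faces.

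**Assembling these pieces:** choose $\delta$ from Theorem \ref{subcmplx}, split on the event $\{0 < f_d(\rinf) \le \delta n^d\}$ versus $\{f_d(\rinf) > \delta n^d\}$. On the first event a small minimal core exists and hence (a.a.s. excluded) $\partial\Delta_{d+1} \subseteq Y$, whose probability of persisting under collapse is controlled by Claim \ref{exps}; on the complementary large-$f_d$ event, the expectation bound $\E[f_d(\rinf)] \le |w(\cg)| = o(n^d)$ derived from $\E[g(Y)]=o(n^d)$ via the shadow inequality, together with Markov, gives $\prob[f_d(\rinf) > \delta n^d] = o(1)$ after taking the truncation parameter $k$ large enough that $\delta' \to 0$. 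The delicate calibration is the order of quantifiers: $\delta$ is fixed first (from Theorem \ref{subcmplx}), then $k$ and $\delta'$ are chosen small relative to $\delta$, and only then is $n \to \infty$, so that every $o(1)$ above is genuinely vanishing.
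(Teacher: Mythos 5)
Your opening matches the paper's proof: the inclusion $\rinf(d) \subseteq w(\cg(Y))$, the estimate $\E[g(Y)]=o(n^d)$ from Claim~\ref{exps} and symmetry, and Markov's inequality to get $g(Y)\le \epsilon\delta n^d$ a.a.s. The genuine gap is in the passage from ``$g(Y)$ is small'' to ``$f_d(\rinf)$ is small''. Kruskal--Katona applied to a family $\cg$ of $\epsilon\delta n^d$ faces gives only $|w(\cg)| = O\bigl((\epsilon\delta)^{(d+1)/d}\, n^{d+1}\bigr)$ (your exponent $g^{d/(d-1)}$ should be $g^{(d+1)/d}$, as you later write), and this is of order $n^{d+1}$; so your assertion that $\E[f_d(\rinf)] \le |w(\cg)| = o(n^d)$ follows ``via the shadow inequality'' is off by a factor of $n$. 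To get down to scale $n^d$ one must intersect $w(\cg(Y))$ with the random set $Y(d)$, and this is precisely the probabilistic crux: $\cg(Y)$ is determined by $Y$, so $|w(\cg(Y))\cap Y(d)|$ is \emph{not} a binomial variable and cannot be replaced by $p\,|w(\cg(Y))|$. The paper resolves this dependence with a union bound over all \emph{fixed} families $\cg$ of size $\epsilon\delta n^d$: for fixed $\cg$, the variable $|w(\cg)\cap Y(d)|$ is binomial $B(N,p)$ with $N\le C_1\epsilon^{(d+1)/d}n^{d+1}$, a Chernoff bound gives $\prob[|w(\cg)\cap Y(d)|>\delta n^d]\le (C_2\epsilon^{(d+1)/d})^{\delta n^d}$, and the entropy cost $\binom{\binom{n}{d}}{\epsilon\delta n^d}\le (e/(\epsilon\delta))^{\epsilon\delta n^d}$ is then absorbed by choosing $\epsilon$ so that $(e/(\epsilon\delta))^{\epsilon}C_2\epsilon^{(d+1)/d}<e^{-1}$. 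This decoupling step, with its entropy-versus-Chernoff calibration, is absent from your proposal, and it is what the theorem actually rests on.

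Your fallback, invoking Theorem~\ref{subcmplx} together with a ``contradiction'' to Claim~\ref{exps}, cannot repair the gap. Theorem~\ref{subcmplx} speaks only about cores with at most $\delta(c)n^d$ simplices, so it gives no information about the event $f_d(\rinf)>\delta n^d$ that Theorem~\ref{rrinf} must rule out; conversely, the event $0<f_d(\rinf)\le\delta n^d$, on which you spend much of your effort, is not part of this theorem at all (it is handled later, in the proof of Theorem~\ref{collapse}, by conditioning on $\cf_{n,d}$). Moreover, the claimed contradiction is false: Claim~\ref{exps} concerns one \emph{fixed} $\tau$, while copies of $\partial\Delta_{d+1}$ appear --- and survive collapsing, being cores --- with probability bounded away from zero, since their expected number is $\Theta(1)$. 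If your argument were valid, it would show that $Y$ is a.a.s. $d$-collapsible with no conditioning, contradicting $\lim_n\prob[\cf_{n,d}]=\exp(-c^{d+2}/(d+2)!)<1$. Incidentally, your first-moment instinct can be salvaged, but by a different estimate than the one you give: since every $\sigma\in\rinf(d)$ contributes $d+1$ faces to $\cg(Y)$, one has $f_d(\rinf)\le\frac{1}{d+1}\sum_{\tau\in\dn(d-1)}\deg_Y(\tau)\,\mathbf{1}[\tau\in\cg(Y)]$, and Cauchy--Schwarz with $\E[\deg_Y(\tau)^2]=O(1)$ and Claim~\ref{exps} yields $\E[f_d(\rinf)]=o(n^d)$, after which Markov finishes; the shadow inequality plays no role in that version.
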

\noindent
{\bf Proof:} Let $0<\epsilon=\epsilon(d,c,\delta)<1$ be a constant whose value
will be fixed later. Clearly
$$\prob[f_d(\rinf)>\delta n^d] \leq $$ $$\prob[g(Y) >\epsilon \delta n^d] +\prob[g(Y) \leq
\epsilon \delta n^d ~~~\&~~~ f_d(\rinf)> \delta n^d].
$$
To bound the first summand note that $E[g]=o(n^d)$ by Claim \ref{exps}.
Hence by Markov's inequality
$$
\prob[g(Y)>\epsilon \delta n^d] \leq (\epsilon \delta n^d)^{-1} E[g] =o(1).$$
Next note that
$$
\prob[g(Y) \leq
\epsilon \delta n^d ~~~\&~~~ f_d(\rinf)> \delta n^d] \leq $$
$$\sum_{\{\cg \subset \dn(d-1):|\cg|=\epsilon \delta n^d\}} \prob [|w(\cg)\cap Y(d)| >\delta n^d].$$
Fix a
$\cg \subset \dn(d-1)$ such that $|\cg|=\epsilon \delta n^d$.
By the Kruskal-Katona theorem there exists a $C_1=C_1(d,\delta)$ such that
$$
N=|w(\cg)| \leq C_1 \epsilon^{\frac{d+1}{d}}n^{d+1}.$$
Applying the large deviation estimate for the binomial distribution $B(N,\frac{c}{n})$
and writing $C_2=\frac{e c C_1}{\delta}$ we obtain
$$
\prob[|w(\cg)\cap Y(d)|>\delta n^d] \leq (C_2 \epsilon^{\frac{d+1}{d}})^{\delta n^d}.$$
On the other hand
$$
\binom{\binom{n}{d}}{\epsilon\delta n^d} \leq (\frac{e}{\epsilon \delta})^{\epsilon \delta n^d}.$$
Choosing $\epsilon$ such that $$(\frac{e}{\epsilon \delta})^{\epsilon} C_2 \epsilon^{\frac{d+1}{d}}<e^{-1}$$
it follows that
$$
\prob[g(Y) \leq
\epsilon \delta n^d ~~~\&~~~ f_d(\rinf)> \delta n^d] \leq \exp(-\delta n^d).$$
{\enp}
\noindent
{\bf Proof of Theorem \ref{collapse}:} Let $c < \gamma_d$ and $p=\frac{c}{n}$.
By Theorem \ref{subcmplx} there exists a $\delta>0$ such that
a.a.s. any non-$d$-collapsible subcomplex $K$ of $Y \in Y_d(n,\frac{c}{n})$ such that
$f_d(K) \leq \delta n^d$ contains the boundary of a $(d+1)$-simplex.
It follows that
$$\prob ~[Y~{\rm non~} d-{\rm collapsible}~|~Y \in  \cf_{n,d}]=\prob ~[f_d(\rinf)>0~|~Y \in  \cf_{n,d}] \leq$$
$$\prob ~[f_d(\rinf)>\delta n^d]\cdot \prob~[~Y \in  \cf_{n,d}]^{-1}+\prob ~[0<f_d(\rinf) \leq \delta n^d~|~Y \in  \cf_{n,d}].$$
The first summand is $o(1)$ by Theorem \ref{rrinf}, and the second summand is $o(1)$ by Theorem \ref{subcmplx}.
{\enp}

\section{Concluding Remarks}
\label{s:remark}

Let us remark that, estimating the vanishing probabilities by a function
which tends to 0 as $n\to\infty$ faster that $n^{-d}$
one may show a random process statement
slightly stronger than Theorem~\ref{subcmplx}
(see  \cite{L}, where a similar result is shown
for the $k$-core of
random graphs). More specifically, let us define
the $d$-dimesional random process
$\mathcal{Y}_d=\{Y_d(n,M)\}_{M=0}^{\binom n{d+1}}$ as the Markov
chain whose stages are simplicial complexes, which starts with
the full $(d-1)$-dimensional skeleton of $\Delta_{n-1}$ and no
$d$-simplices, and in each stage $Y_d(n,M+1)$ is obtained from
$Y_d(n,M)$ by adding to it one $d$-simplex chosen uniformly  at random
from all the $d$-simplices which do not belong to $Y_d(n,M)$.
Then, the following  holds.

\begin{theorem}
\label{subcmplxprocess}
There exists a constant $\alpha=\alpha(d)>0$ such that
for almost every  $d$-dimesional random process
$\mathcal{Y}_d=\{Y_d(n,M)\}_{M=0}^{\binom n{d+1}}$
  there exists a stage $\hat M=\hat M(\mathcal{Y}_d)$
such that the core of $Y_d(n,\hat M)$ is of the size
$O(1)$ and consists of boundaries of $(d+1)$-simplices,
while the core of $\hat M=\hat M(\mathcal{Y}_d+1)$
contains at least $\alpha n^{d}$ $d$-simplices.
\end{theorem}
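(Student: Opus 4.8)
The statement asserts a first-order (discontinuous) phase transition for the $d$-core along the process, and the plan is to follow the template used by {\L}uczak \cite{L} for the $k$-core of random graphs. The single most useful structural fact is that the $d$-core is \emph{monotone} along the process: if a subcomplex $K\subseteq Y_d(n,M)$ is an abstract minimal core (every $(d-1)$-face of $K$ lies in at least two $d$-faces of $K$), then no $d$-collapse of $Y_d(n,M)$ can ever delete a simplex of $K$, so $K$ persists in the core of every later $Y_d(n,M')$. Consequently ``the core contains a minimal core of a prescribed isomorphism type and size'' is an increasing event, and so is the presence of any fixed non-$\partial\Delta_{d+1}$ core. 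I would define $\hat M$ to be the last stage at which the core still consists solely of a bounded number of copies of $\partial\Delta_{d+1}$, and then establish two things: that the core stays $O(1)$ and clean up to $\hat M$, and that the single simplex added at step $\hat M+1$ forces the core past $\alpha n^{d}$.

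For the clean side I would combine \theoremref{subcmplx} with the monotonicity above. Fix a constant $c^{*}$ just above the threshold at which the giant core emerges (governed by the discontinuity of $\rho_d$). Since the event ``$Y_d(n,p)$ contains a minimal core of size between $d+3$ and $\delta(c)n^{d}$ that is not $\partial\Delta_{d+1}$'' is increasing, \theoremref{subcmplx} applied at the single density $c^{*}$ shows it is a.a.s.\ absent for every stage whose density is at most $c^{*}$; here it is essential that the first moment of any fixed small rigid core (e.g.\ a triangulated sphere) is $o(1)$ even supercritically, since for a configuration on $v\le\frac{d+3}{d+4}m$ vertices with $m$ simplices one has $n^{v}p^{m}=c^{m}n^{v-m}\to0$, so the emerging giant a.a.s.\ contains no such piece. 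The number of $\partial\Delta_{d+1}$ components is Poisson with constant mean, hence $O(1)$ a.a.s. Thus for every stage up to $\hat M$ the core is $O(1)$ and consists only of boundaries of $(d+1)$-simplices.

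For the jump I would use the random $d$-tree process of \sectionref{s:tree}. The local neighbourhood of the simplex added at step $\hat M+1$ is, in the $n\to\infty$ limit, distributed as a Poisson $d$-tree, and whether this simplex survives all collapses is governed by the survival probability $\rho_d(c)$, which jumps from $1$ for $c<\gamma_d$ to $\rho_d(\gamma_d)<1$ at the threshold. This discontinuity of $\rho_d$ is exactly the branching-process phase transition responsible for the all-or-nothing behaviour: the cascade triggered by the new simplex is either subcritical, in which case the core remains $O(1)$, or supercritical, in which case a positive density of simplices survive collapsing. Taking expectations as in the proof of \theoremref{larged} and converting to an a.a.s.\ lower bound by a McDiarmid/second-moment concentration argument of the kind used in \theoremref{rrinf} would yield a positive constant $\alpha=\alpha(d)$ with $f_d(\mathrm{core})\ge\alpha n^{d}$ on the supercritical side, and would identify $\alpha$ with the discontinuous limit of the core density at the threshold.

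The main obstacle is ruling out \emph{macroscopically intermediate} cores, namely those whose size lies in the gap between $\delta(c)n^{d}$ (beyond which the first-moment bound of \lemmaref{l:1} ceases to be $o(1)$, since its dominant factor $(2^{d+1}d^{3}\delta^{1/d^{4}}c)^{m}$ exceeds $1$) and $\alpha n^{d}$. First moments are useless in this range, so the gap must be closed by showing that the core-peeling process has no intermediate fixed point: its surviving density is concentrated near either $0$ or the value determined by $\rho_d$, never in between. Making this uniform over the $\Theta(n^{d})$ stages of the critical window is precisely where one needs the sharpened estimate flagged before the statement, namely vanishing probabilities of order $o(n^{-d})$, so that a union bound over the window (via the standard coupling of $Y_d(n,M)$ with $Y_d(n,p)$) still gives $o(1)$. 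Matching the constants $c^{*}$, $\gamma_d$ and $\alpha$, and proving the requisite concentration of the core size, is the delicate part and mirrors the corresponding analysis for the $k$-core in \cite{L}.
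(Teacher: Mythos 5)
You should know at the outset that the paper contains no proof of this statement: Theorem~\ref{subcmplxprocess} sits in the concluding remarks, and the authors' entire justification is the sentence preceding it, namely that by estimating the relevant vanishing probabilities by functions tending to zero faster than $n^{-d}$ one may follow {\L}uczak's analysis of the $k$-core of random graphs \cite{L}. Judged against that, your proposal is not a different route but a fleshed-out version of the same one: the {\L}uczak template, the use of Theorem~\ref{subcmplx} to keep the core clean below the transition, and the union bound over the $\Theta(n^d)$ stages of the critical window as the place where the sharpened $o(n^{-d})$ estimates enter. Your monotonicity observation --- a core subcomplex can never lose a simplex to collapsing, so it persists in the core of every later stage of the process, making the relevant events increasing --- is a genuinely useful ingredient that the paper does not spell out, and it correctly reduces the ``clean side'' below any fixed density $c^*$ to a single application of Theorem~\ref{subcmplx}.

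That said, the gaps you candidly flag in your last paragraph are real, and nothing in the paper closes them either; they are the actual content hidden behind the citation of \cite{L}. Two specific cautions if you try to complete the argument. First, the core of a complex is in general not the union of its minimal cores (already for $d=1$, two triangles joined by a path form a core whose only minimal cores are the two triangles), so ``the core consists of boundaries of $(d+1)$-simplices'' is strictly stronger than ``every minimal core is a $\partial\Delta_{d+1}$''; your definition of $\hat M$ and your jump argument must respect this distinction, since a priori the core could grow by sprouting such ``connective tissue'' without acquiring any new minimal core. Second, the paper's proven results give a core of size $\Omega(n^d)$ only for $c>c_d$ (via Theorem~\ref{larged}: a.a.s. $h_d(Y)\geq v(Y)=\Omega(n^d)$, and every $d$-cycle is supported on a core subcomplex, so the core has at least $h_d(Y)$ simplices), not for all $c>\gamma_d$; since the conjectured transition is at $\gamma_d$, locating the jump at the single stage $\hat M+1$ --- rather than merely showing the core is already huge by density $c_d$ --- requires exactly the supercritical branching-process and concentration analysis that neither you nor the paper carries out.
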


Many questions remain open. The most obvious ones are

\begin{itemize}
\item
What is the threshold for $d$-collapsibility of random simplicial complexes in $\cf_{n,d}$ ? We conjecture that it is indeed
$p=\gamma_d/n$.
\item
Derive better upper bounds on the threshold for the nonvanishing of $H_d(Y)$. We intend to return
to this subject in subsequent papers.
\item
In particular does the threshold for the vanishing of $H_d(Y)$ depend on the underlying field?
\item
Although this question is implicitly included in the above two questions, it is of substantial interest
in its own right: Can you show that the two thresholds (for $d$-collapsibility and for the
vanishing of the top homology) are
distinct? We have good reasons to think that the two thresholds are, in fact, quite different.
In particular, although $d$-collapsibility is a sufficient condition for the vanishing of $H_d$, there is only a vanishingly small probability that a
random simplicial complex with trivial top homology is $d$-collapsible.
\end{itemize}

\end{document}